\newtheorem{thm}{Theorem}[section]
\newtheorem{lem}[thm]{Lemma}
\newtheorem{cor}[thm]{Corollary}
\newtheorem{conj}[thm]{Conjecture}
\newtheorem{defi}[thm]{Definition}
\newtheorem{re}[thm]{Result}
\newtheorem{question}[thm]{Question}
\newtheorem{example}[thm]{Example}
\newtheorem{remark}[thm]{Remark}
\newenvironment{rmk}{\begin{remark} \em}{\end{remark}}
\DeclareMathOperator{\BH}{ {\rm BH} }
\DeclareMathOperator{\ord}{ {\rm ord} }
\DeclareMathOperator{\lcm}{ {\rm lcm} }
\begin{document}
\title{Necessary Conditions for the Existence of Group-Invariant Butson Hadamard Matrices and
a New Family of Perfect Arrays}
\author{Tai Do Duc\\ Division of Mathematical Sciences\\
School of Physical \& Mathematical Sciences\\
Nanyang Technological University\\
Singapore 637371\\
Republic of Singapore}

\maketitle

\begin{abstract}
Let $G$ be a finite abelian group and let $\exp(G)$ denote the least common multiple of the orders of all elements of $G$. A $\BH(G,h)$ matrix is a $G$-invariant $|G|\times |G|$ matrix $H$ whose entries are complex $h$th roots of unity such that $HH^*=|G|I_{|G|}$. By $\nu_p(x)$ we denote the $p$-adic valuation of the integer $x$. Using bilinear forms over abelian groups,  we \cite{ducschmidt} constructed
new classes of $\BH(G,h)$ matrices under the following conditions.
\begin{itemize}
\item[(i)] $\nu_p(h) \geq \lceil \nu_p(\exp(G))/2 \rceil $ for any prime divisor $p$ of $|G|$, and
\item[(ii)] $\nu_2(h) \geq 2$ if $\nu_2(|G|)$ is odd and $G$ has a direct factor $\mathbb{Z}_2$.
\end{itemize}
The purpose of this paper is to further study
the conditions on $G$ and $h$ so that a $\BH(G,h)$ matrix exists.
We will focus on $\BH(\mathbb{Z}_n,h)$ and $\BH(G,2p^b)$ matrices,
where $p$ is an odd prime.
Combining our work with previously known results, there are $2687$ open cases left for the existence of $\BH(\mathbb{Z}_n,h)$
matrices in which $1\leq n,h \leq 100$.
Finally, we show that $\BH(\mathbb{Z}_n,h)$ matrices can be used to
construct a new family of perfect polyphase arrays.
\end{abstract}

\section{Introduction}
Let $n$ and $h$ be positive integers. An $n\times n$ matrix $H$ whose entries are complex $h$th roots of unity is called a \textbf{Butson Hadamard matrix} if $HH^* = nI$,
where $H^*$ is the complex conjugate transpose of $H$ and $I$ is the identity matrix of order $n$.
We also say that $H$ is a {\boldmath$\BH(n,h)$} matrix.

Let $(G,+)$ be a finite abelian group of order $n$. An $n\times n$ matrix $A=(a_{g,k})_{g,k\in G}$ is {\boldmath $G$}\textbf{-invariant} if $a_{g+l,k+l}=a_{g,k}$ for all $g,k,l\in G$. A $G$-invariant $\BH(n,h)$ matrix is also called a {\boldmath$\BH(G,h)$} matrix. Note that in the case $G=\mathbb{Z}_n$, a cyclic group of order $n$, a $\BH(\mathbb{Z}_n,h)$ matrix is a \textbf{circulant} matrix, i.e., a matrix each of whose rows (except the first) is obtained from the previous row by shifting one position to the right and moving the last entry to the front.

\medskip

For any multiple $h'$ of $h$, a $\BH(G,h)$ matrix is also a $\BH(G,h')$ matrix, as each $h$th root of unity is automatically a $h'$th root of unity. Therefore, it is important to find those positive integers $h$ such that a $\BH(G,h)$ matrix exists, but $\BH(G,k)$ matrices do not exist for any divisor $k$ of $h$. Group-invariant Butson Hadamard matrices link to many other combinatorial objects like generalized Hadamard matrices, relative difference sets, generalized Bent functions, cyclic $n$-roots, see \cite{sc1}, and perfect polyphase arrays.

\medskip

A sequence $\{a_0,\dots,a_{n-1}\}$ is called a \textbf{perfect {\boldmath$h$}-phase sequence} of length $n$ if each $a_i$ is a complex $h$th root of unity and
$$\sum_{i=0}^{n-1}a_i\overline{a_{i+j}}=0 \ \text{whenever} \ j\neq 0,$$ 
where the indices are taken modulo $n$. Such a sequence is equivalent to a $\BH(\mathbb{Z}_n,h)$ matrix whose first row is $(a_0,\dots,a_{n-1})$. More generally, a multi-dimensional array $A=(a_{i_1,\dots,i_k})$ of size $n_1\times\dots\times n_k$ is called a \textbf{perfect \boldmath$h$-phase array} if its entries are complex $h$th roots of unity and
$$\sum_{0\leq i_j\leq n_j-1 \ \forall \ j} a_{i_1,\dots,i_k}\overline{a}_{i_1+s_1,\dots,i_k+s_k}=0$$
 whenever $(s_1,\dots,s_k) \neq (0,\dots,0)$, where the indices are taken modulo $n_j$ for $1\leq j \leq k$. The values 
$$R_{s_1,\dots,s_k}=\sum_{i_1,\dots,i_k}a_{i_1,\dots,i_k}\bar{a}_{i_1+s_1,\dots,i_k+s_k}$$
are called \textbf{autocorrelations} of the array $A$. In Lemma \ref{equivalence}, we show that such an array is equivalent to a $\BH(\mathbb{Z}_{n_1}\times\dots\times \mathbb{Z}_{n_k},h)$ matrix. Perfect sequences and perfect arrays have a wide range of applications in communication and radar systems, see \cite{bomer}, \cite{fan}, \cite{golomb} for example. In the last section of this paper, we use (circulant) $\BH(\mathbb{Z}_n,h)$ matrices to construct a new family of perfect polyphase arrays. Note that by \cite[Corollary $2.5$]{ducschmidt}, there exist $\BH(\mathbb{Z}_n,h)$ matrices whenever
\begin{itemize}
\item[$(i)$]  $\nu_p(h) \geq \lceil \nu_p(n)/2 \rceil$ for every prime divisor $p$ of $n$, and
\item[$(ii)$] $\nu_2(h) \ge 2$  if  $n\equiv 2\ (\bmod\ 4)$,
\end{itemize}
where $\nu_p(x)$ denotes the $p$-adic valuation of the integer $x$.

\medskip

\noindent In this paper, we focus on studying necessary conditions for the existence of the following two types of Butson Hadamard matrices:
\begin{itemize}
\item $\BH(\mathbb{Z}_n,h)$ matrices and
\item $\BH(G,2p^b)$ matrices, where $p$ is an odd prime.
\end{itemize}
The main tools used in this paper are group-ring equations combined with techniques from the field-descent method \cite{sc2, sc3}, and  upper bounds on the norm of cyclotomic integers. 
We sketch the approach as follows.

\medskip

Let $(a_g)_{g\in G}$ be the first row of a $\BH(G,h)$ matrix $H$. Let $D=\sum_{g\in G} a_gg$ be an element of the group ring $\mathbb{Z}[\zeta_h][G]$. By Result \ref{equiv} of Section 2 below, the equation $HH^*=|G|I$ is equivalent to
\begin{equation} \label{group ring}
DD^{(-1)}=|G|.
\end{equation}
Put $n=|G|$ and $m=\lcm(\exp(G),h)$. Let $\chi$ be a character of $G$ and put $X=\chi(D) \in \mathbb{Z}[\zeta_m]$. The equation (\ref{group ring}) implies
\begin{equation} \label{resulting weil}
|X|^2=n, \ X \in \mathbb{Z}[\zeta_m].
\end{equation}

By the field-descent method, there exists an integer $j$ such that $X\zeta_m^j$ belongs to a proper subfield $\mathbb{Q}(\zeta_k)$ of $\mathbb{Q}(\zeta_m)$. Note that $X\zeta_m^j$ is a sum of roots of unity in $\mathbb{Q}(\zeta_m)$. As $X\zeta_m^j$ belongs to the subfield $\mathbb{Q}(\zeta_k)$, usually a lot of the roots of unity in the sum cancel out. The norm of the remaining terms can be bounded using our result on the norm of cyclotomic integers. 
On the other hand,  by (\ref{resulting weil}), this norm is equal to $\sqrt{n}$. 
We thus obtain a restriction on $n$ and $h$.

\medskip

\noindent To end this section, we give a summary of known results on the existence of Butson Hadamard matrices.

\begin{re} \label{nonexistence} The following Butson Hadamard matrices do not exist.
\begin{itemize}
\item[1.] \cite[Leung-Schmidt]{leu1} $\BH(\mathbb{Z}_{2p^2},2p)$ matrices with $p$ being an odd prime.
\item[2.] \cite[Ma-Ng]{mang} $\BH(\mathbb{Z}_{3pq},3)$ matrices with $p, q >3$ being distinct primes.
\item[3.] \cite[Hiranandani-Schlenker]{hirasch} $\BH(\mathbb{Z}_{p+q},pq)$ matrices with $p,q >3$ being distinct primes.
\end{itemize}
\end{re}

\begin{re} \label{sylvester conditions}
\cite[Sylvester conditions]{hirasch} 
\begin{itemize}
\item[(i)] If a $\BH(n,2)$ matrix exists, then $n=2$ or $4 \mid n$. Furthermore, if a $\BH(\mathbb{Z}_n,2)$ matrix exists, then $4\mid n$ and $n/4$ is a square.
\item[(ii)] Let $p\geq 3$ be a prime. If a $\BH(p+2,h)$ matrix exists, then $h$ does not have the form $2p^b$ for some positive integer $b$.
\item[(iii)] Let $q\geq 3$ be a prime. If a $\BH(2q,h)$ matrix exists, then $h$ does not have the form $2^ap^b$ for $a,b\in \mathbb{Z}^+$ and any prime $p>q$.
\end{itemize}
\end{re}

\begin{re} \cite[Lam-Leung]{lam} \label{Lam-Leung}
If a $\BH(G,h)$ matrix exists and $p_1,\ldots,p_r$ are the distinct
prime divisors of $h$, then there exist nonnegative integers
$a_i$ such that $|G|=\sum_{i=1}^r a_ip_i$. 
\end{re}

 \begin{re} \cite[Brock]{brock} \label{Brock}
Let $n$ be an integer and let $m$ be the square-free part of $n$. Assume that $m$ is odd. If a $\BH(n,h)$ matrix exists, then $m$ has no prime factor $p$ which satisfies
\begin{itemize}
\item[(a)] $p$ does not divide $h$, and
\item[(b)] $p^j \equiv -1 \pmod{h}$ for some integer $j$.
\end{itemize}
\end{re}

\begin{re}\cite[Duc-Schmidt]{ducschmidt} \label{TaiSchmidt}
Let $G$ be an abelian group and $h$ be a positive integer. Then a $\BH(G,h)$ matrix exists if
\begin{itemize}
\item[(i)] $v_p(h) \geq \lceil v_p(\exp(G)/2) \rceil$ for every prime divisor $p$ of $|G|$, and
\item[(ii)] $v_2(h) \geq 2$ if $v_2(|G|)$ is odd and $G$ has a direct factor $\mathbb{Z}_2$.
\end{itemize}
Moreover, if $G=\mathbb{Z}_{p^a}$ is a cyclic group of prime-power order, 
then (i) and (ii) are also necessary conditions for the existence of $\BH(\mathbb{Z}_{p^a},h)$ matrices.
\end{re}

 \noindent We remark that the existence problem for $\BH(\mathbb{Z}_n,h)$ matrices is in general a difficult problem. For example, the existence of $\BH(\mathbb{Z}_n,2)$ matrices is equivalent to the existence of circulant Hadamard matrices. Ryser \cite[p. 134]{rys} conjectured that circulant Hadamard matrices exist only for orders $n=1$ or $n=4$. This conjecture remains open for more than $50$ years.

\bigskip


\section{Preliminaries}
\subsection{Notations and Group Rings}
In this section, we fix some notation, state definitions and known results which will be used later. 
We start with some notations.
\begin{itemize}
\item[1.] For a positive integer $h$, we use $\zeta_h$ to denote a primitive $h$th root of unity.
\item[2.] For a prime $p$ and an integer $n$, let $\nu_p(n)$ denote the $p$-adic valuation of $n$, that is, the
largest nonnegative integer $x$ such that $p^x$ divides $n$.
\item[3.] A positive integer is \textbf{square-free} if it is not divisible by any square of a prime. We call an integer $l$ the \textbf{square-free part} of $n$ if $l$ is the product of all prime divisors $p$ of $n$ in which $\nu_p(n)$ is odd.
\item[4.] For coprime integers $m$ and $n$, we denote the smallest positive integer $j$ such that $m^j\equiv 1 \pmod{n}$ by $\ord_n(m)$.
\item[5.] We call a complex number $X$ a \textbf{cyclotomic integer} if it is a sum of complex roots of unity.
\end{itemize}

\noindent As it turns out, group-ring equations are pivotal in our study. 

Let $G$ be a finite abelian group of order $n$. Let $R$ be a ring with identity $1$ and let $R[G]$ denote the group ring of $G$ over $R$. Any element $X \in R[G]$ is uniquely expressed as $X=\sum_{g \in G} a_gg$, $a_g \in R$. We call the set $\{g\in G: a_g\neq 0\}$ the \textbf{support} of $X$ and denote it by $\text{supp}(X)$. Two elements $X=\sum_{g \in G} a_gg$ and $Y=\sum_{g\in G}b_gg$ are equal if and only if $a_g=b_g$ for all $g\in G$. We denote $1_G$ as the identity element of $G$. In the case where there is no confusion, we simply write $c$ in the place of $c1_G$. A subgroup $U$ of $G$ is identified with $\sum_{u \in U} u$ in $R[G]$. 

The group of complex characters of $G$ is denoted by $\hat{G}$. It is well known that $\hat{G}$ is isomorphic to $G$. The trivial character of $G$, denoted by $\chi_0$, is defined by $\chi_0(g)=1$ for all $g\in G$. For $D=\sum_{g\in G} a_gg \in R[G]$ and $\chi \in \hat{G}$, write $\chi(D)=\sum_{g\in G} a_g\chi(g)$. Let $U$ be a subgroup of $G$, denote
$$U^\perp=\{\chi \in \hat{G}: \chi(g)=1 \ \forall \ g \in U\}.$$
Note that $U^\perp$ is isomorphic to $\widehat{G/U}$. The following result is from \cite[Lemma 2.8]{feng}.

\begin{re} \label{orthogonal sum}
Let $G$ be a finite abelian group, let $U$ be a subgroup of $G$, and let
$D=\sum_{g \in G} a_gg \in \mathbb{C}[G]$. Then, for any character $\chi \in \hat{G}$, we have
$$\sum_{\tau \in U^\perp}\chi\tau(D)=|U^\perp|\chi\left(\sum_{g\in U}a_gg\right)$$
\end{re}

The next result is called Fourier inversion formula. 
A proof  can be found in \cite[Chapter VI, Lemma 3.5]{bethjung}, for example.

\begin{re} \label{fourier inversion}
Let $G$ be a finite abelian group and let $\hat{G}$ denote the group of characters of $G$. Let $D=\sum_{g\in G} a_gg \in \mathbb{C}[G]$. Then
$$a_g=\frac{1}{|G|}\sum_{\chi \in \hat{G}} \chi(Dg^{-1}) \ \forall \ g\in G.$$
Consequently, if $D, E \in \mathbb{C}[G]$ and $\chi(D)=\chi(E)$ for all $\chi \in \hat{G}$, then $D=E$.
\end{re}

\medskip

In our study, we focus on the ring $R=\mathbb{Z}[\zeta_h]$. Let $D=\sum_{g \in G}a_gg, a_g \in R$ for all $g\in G$, be an element of $R[G]$. Let $t$ be an integer coprime to $h$ and let $\sigma \in \text{Gal}(\mathbb{Q}(\zeta_h)/\mathbb{Q})$ be defined by $\zeta_h^\sigma=\zeta_h^t$. Let $D^{(t)}$ denote
$$D^{(t)}=\sum_{g\in G} a_g^\sigma g^t.$$

\medskip

As mentioned in the introduction, 
a $\BH(G,h)$ matrix is equivalent to a solution of the
group ring equation $DD^{(-1)}=|G|$ 
for some $D\in \mathbb{Z}[\zeta_h][G]$. 
We now state this result formally, see \cite[Lemma 3.3]{ducschmidt} for a proof.

\begin{re} \label{equiv}
Let $G$ be a finite abelian group, let $h$ be a positive integer, 
and let $a_g, g\in G$, be integers. Consider the element $D=\sum_{g\in G}\zeta_h^{a_g}g$ of $\mathbb{Z}[\zeta_h][G]$ and the $G$-invariant matrix $H=(H_{g,k}), g,k\in G$, given by $H_{g,k}=\zeta_h^{a_{k-g}}$. Then $H$ is a $\BH(G,h)$ matrix if and only if
$$DD^{(-1)}=|G|.$$
\end{re}

\medskip

The next result is a generalization of Ma's lemma, see \cite{ma} or \cite[Lemma 1.5.1]{sc2}. The proof of this result is similar to the proof of the original result. We provide it here for the convenience of the reader.

\begin{re} \label{ma lemma}
Let $p$ be a prime and let $G$ be a finite abelian group whose Sylow $p$-subgroup $S$ is cyclic. Let $P$ be the subgroup of $S$ of order $p$. Let $t, h$ be positive integers such that $h$ is not divisible by $p$. If $D\in \mathbb{Z}[\zeta_h][G]$ satisfies
\begin{equation} \label{congruence assumption}
\chi(D) \equiv 0 \pmod{p^t}
\end{equation}
for all characters $\chi \in \hat{G}$ of order divisible by $|S|$, then there exist $X,Y \in \mathbb{Z}[\zeta_h][G]$ such that
\begin{equation} \label{form of D}
D=p^tX+PY.
\end{equation}
\end{re}

\begin{proof}
Assume $|S|=p^s$ for some $s\in \mathbb{Z}^+$. We first prove the result for $D\in \mathbb{Z}[\zeta_h][S]$.
Let $g$ be a generator of the cyclic group $S$.
Define a ring homomorphism
\begin{eqnarray*}
   \rho: \mathbb{Z}[\zeta_h][S] & \rightarrow & \mathbb{Z}[\zeta_{hp^s}], \\
  \sum_{i=0}^{p^s-1}a_i g^i &\mapsto& \sum_{i=0}^{p^s-1}a_i\zeta_{p^s}^i
\end{eqnarray*}
for all $a_0,\ldots,a_{p^s-1}\in \mathbb{Z}[\zeta_h]$. 
We claim that
\begin{equation} \label{kernel of rho}
\ker(\rho)=\{PY: Y \in \mathbb{Z}[\zeta_h][S]\}.
\end{equation}

Let $\sum_{i=0}^{p^s-1}a_ig^i \in \ker(\rho)$, then $\sum_{i=0}^{p^s-1}a_i\zeta_{p^s}^i=0$. Since $p$ does not divide $h$, the minimal polynomial of $\zeta_{p^s}$ over $\mathbb{Z}[\zeta_h]$ is $\phi(x)=1+x^{p^{s-1}}+\cdots+x^{p^{s-1}(p-1)}$. We obtain
$$\sum_{i=0}^{p^s-1}a_ix^i=\phi(x)f(x), \ f(x) \in \mathbb{Z}[\zeta_h][x],$$
so $\sum_{i=0}^{p^s-1}a_ig^i=\phi(g)f(g)=Pf(g)$, which proves (\ref{kernel of rho}).

Since $\rho(D) \equiv 0 \pmod{p^t}$, there exists $X\in \mathbb{Z}[\zeta_h][S]$ so that $\rho(D)=p^t\rho(X)$. We obtain $(D-p^tX) \in \ker(\rho)$ and hence
$$D=p^tX +PY \ \text{for some} \ Y \in \mathbb{Z}[\zeta_h][S].$$

\medskip

\noindent The result is proved for $D \in \mathbb{Z}[\zeta_h][S]$. Now, write $G=S\times H$ where $|H|\not\equiv 0\pmod{p}$ and write
$$D=\sum_{k \in H} D_kk, \ D_k\in \mathbb{Z}[\zeta_h][S].$$
Let $\psi$ be a character of $S$ of order $p^s$. 
For any character $\gamma$ of $H$, $\psi\gamma$ is a character of $G$ defined by $\psi\gamma(xy)=\psi(x)\gamma(y)$ for all $x\in S, \ y\in H$. Note that the order of $\psi\gamma$ is divisible by $p^s$. By (\ref{congruence assumption}), we have
   \begin{equation} \label{congruence equation}
     \psi\gamma(D)=\sum_{k \in H} \psi(D_k)\gamma(k) \equiv 0 \pmod{p^t}.
   \end{equation}
   
Define an $|H| \times |H|$ matrix by $M=(\gamma(k))_{\gamma \in \hat{H}, k\in H}$ and 
 a column vector $u$ by $u=(\psi(D_k))_{k\in H}^T$. As the congruence (\ref{congruence equation}) holds for any character $\gamma$ of $H$, all entries of $N:=Mu$ are divisible by $p^t$. 
By the orthogonality relations for characters, 
we have $M^{-1}=M^{*}/|H|$ where $M^*$ is the complex conjugate transpose of $M$. 
Thus $u = M^{-1}N = M^{*}N/|H|$. As $\gcd(p,|H|)=1$ and all entries of $N$
are divisible by $p^t$, we obtain
\begin{equation} \label{congruence equation for components}
\psi(D_k) \equiv 0 \pmod{p^t} \ \text{for all} \ k \in H.
\end{equation}

The equation (\ref{congruence equation for components}) holds for every character $\psi$ of $S$. By the argument at the beginning of the proof, any $D_k$, $k\in H$, can be written as $p^tX_k+pY_k$ for some $X_k, Y_k \in \mathbb{Z}[\zeta_h][S]$. Therefore, the equation (\ref{form of D}) follows, as $D=\sum_{k\in H} D_kk$.

\medskip

\end{proof}

\bigskip


\subsection{Number Theoretic Results}
We start this subsection with the following definition.

\begin{defi} \label{selfconjugate definition}
Let $p$ be a prime, let $n$ be a positive integer,
and write $n=p^an'$, where $\gcd(p,n')=1$. We call $p$  \textbf{self-conjugate modulo n} if there exists an integer $j$ such that $p^j \equiv -1 \pmod{n'}$. A composite integer $m$ is self-conjugate modulo $n$ if every prime divisor of $m$ has this property.
\end{defi}

\begin{re}\cite[Proposition 2.11]{leu2} \label{selfconjugacy divisibility}
Let $X=\sum_{i=0}^{m-1} a_i\zeta_m^i \in \mathbb{Z}[\zeta_m]$ so that $X\bar{X}=n$. Let $u$ be the largest divisor of $n$ which is self-conjugate modulo $m$. Write $u=w^2k$, where $k=\prod_{i=1}^r p_i$ is the square-free part of $u$. Then 
$$m\equiv 0 \pmod{k}.$$
Furthermore, for  $i=1,\dots, r$, write
$$\Theta_i=\begin{cases} 1-\zeta_4 \ \text{if} \ p_i=2,\\ \sum_{j=1}^{p_i-1} \left(\frac{j}{p_i}\right) \zeta_{p_i}^j \ \text{otherwise}.\end{cases}$$
where $(-)$ is the Legendre symbol. Then
$$X\equiv 0 \pmod{w\prod_{i=1}^r \Theta_i}.$$
\end{re}

\begin{defi} \label{field descent F(m,n)}
Let $m$ and $n$ be positive integers and let $m=\prod_{i=1}^t p_i^{c_i}$ be the prime factorization of $m$. For each prime divisor $q$ of $n$, define
$$\widetilde{m}_q =\begin{cases} \prod_{p_i \neq q} p_i \ \text{if} \ m \ \text{is odd} \ \text{or} \ q=2, \\
								4\prod_{p_i\neq 2,q} p_i \ \text{otherwise}.
			\end{cases}$$
Let $D(n)$ denote the set of all prime divisors of $n$. Let
$$F(m,n)=\prod_{i=1}^t p_i^{b_i}$$
be the minimum multiple of $\prod_{i=1}^t p_i$ such that for every pair $(i,q)$ with $1\leq i\leq t$ and $q\in D(n)$, at least one of the following conditions is satisfied.
\begin{itemize}
\item[(a)] $q=p_i$ and $(p_i,b_i)\neq (2,1)$, or
\item[(b)] $b_i=c_i$, or
\item[(c)] $q\neq p_i$ and $q^{\ord_{\widetilde{m}_q}(q)} \not\equiv 1 \pmod{p_i^{b_i+1}}$.
\end{itemize}
\end{defi}

\begin{re}\cite[Theorem 3.5]{sc3} \label{field descent schmidt}
Let $m$ and $n$ be positive integers. If $X \in \mathbb{Z}[\zeta_m]$ satisfies $X\bar{X}=n$, then there exists an integer $j$ such that $X\zeta_m^j \in \mathbb{Z}[\zeta_{F(m,n)}]$. 
 \end{re}

\medskip

 \subsection{Bounds on Norms of Cycloctomic Integers}
We start this subsection with a bound on the norm of a cyclotomic integer. This bound appeared in \cite[Theorem 3.1]{duc} and will serve as the main ingredient for our results later. We provide the proof here for the convenience of the reader.

\begin{re} \label{bound for cyclotomic integer}
Let $\alpha=\sum_{i=0}^{m-1}c_i \zeta_m^i, \ c_i \in \mathbb{Z},$ be an element of $\mathbb{Z}[\zeta_m]$. Then
\begin{equation} \label{bound for norm of cyclotomic integer}
\left | N_{\mathbb{Q}(\zeta_m)/\mathbb{Q}}(\alpha) \right | \leq \left( \frac{m}{\varphi(m)}\sum_{i=0}^{m-1}c_i^2\right)^{\varphi(m)/2}.
\end{equation}
\end{re}

\begin{proof}
Put $f(x)=\sum_{i=0}^{m-1}c_ix^i \in \mathbb{Z}[x]$, then $\alpha=f(\zeta_m)$ and the conjugates of $\alpha$ in $\mathbb{Q}(\zeta_m)$ are $f(\zeta_m^t)$, where $t$ is coprime to $m$. We have
$$\sum_{t=0}^{m-1} |f(\zeta_m^t)|^2=\sum_{i,j,t=0}^{m-1}c_ic_j\zeta_m^{(i-j)t}=m\sum_{i=0}^{m-1}c_i^2.$$
By the inequality of arithmetic and geometric means, we obtain
\begin{eqnarray*}
|N(f(\zeta_m))| &=& \left|\vcenter{ \hbox{$\displaystyle \prod_{\gcd(t,m)=1} f(\zeta_m^t)$}} \right| \\
 &\leq & \left(\frac{\sum_{(t,m)=1}|f(\zeta_m^t)|^2}{\varphi(m)}\right)^{\varphi(m)/2} \\
& \leq &  \left(\frac{m}{\varphi(m)}\sum_{i=0}^{m-1}c_i^2 \right)^{\varphi(m)/2}.
\end{eqnarray*}
\end{proof}

The next result can be derived from \cite[Theorem 2.3.2]{sc2}. We state the result  below and give a simple proof which makes use of Result \ref{bound for cyclotomic integer}.

\begin{re} \label{weil field descent}
Let $m$ and $n$ be positive integers. Let $X=\sum_{i=0}^{m-1}c_i\zeta_m^i$ be an element of $\mathbb{Z}[\zeta_m]$ such that $X\bar{X}=n$. Let $k$ be a divisor of $m$ which is divisible by $F(m,n)$. Then there exists an integer $j$ such that 
\begin{equation} \label{descent}
X\zeta_m^j \in \mathbb{Z}[\zeta_k].
\end{equation}
Moreover, put $d_i=c_{im/k-j}$, $0\leq i\leq k-1$. Then
 \begin{equation} \label{relation n and k}
n \leq \min\Bigg\{ \left(\sum_{i=0}^{k-1} |d_i|\right)^2, \left(\frac{k}{\varphi(k)}\sum_{i=0}^{k-1}d_i^2\right)\Bigg\}.
\end{equation}
\end{re}

\begin{proof}
Since $k$ is divisible by $F(m,n)$, we obtain $X\zeta_m^j \in \mathbb{Z}[\zeta_k]$, as $X\zeta_m^j \in \mathbb{Z}[\zeta_{F(m,n)}]$ by Result \ref{field descent schmidt}. Note that $k$ is divisible by every prime divisor of $m$, as $F(m,n)$ is. Thus $1,\zeta_m,\dots, \zeta_m^{m/k-1}$ are independent over $\mathbb{Q}(\zeta_k)$. This implies $X\zeta_m^j=\sum_{i=0}^{k-1}d_i\zeta_k^i$, where $d_i=c_{im/k-j}$. Clearly, we have
$$n=|X\bar{X}|\leq \left(\sum_{i=0}^{k-1} |d_i|\right)^2.$$
The other part of (\ref{relation n and k}) follows directly from (\ref{bound for norm of cyclotomic integer}).
\end{proof}

\bigskip


\section{Necessary Conditions}
We will focus only on two types of matrices: $\BH(\mathbb{Z}_n,h)$ matrices and $\BH(G,2p^b)$ matrices, where $p$ is an odd prime and $n,h,b$ are positive integers.

\subsection{Existence of {\boldmath $\BH(\mathbb{Z}_n,h)$} Matrices}
Note that by Result \ref{TaiSchmidt}, there exist $\BH(\mathbb{Z}_n,h)$ matrices whenever $n$ divides $(h,n)^2$ and $n$ and $h$ are not both congruent to $2$ modulo $4$. We conjecture that these conditions are also necessary for the existence of $\BH(\mathbb{Z}_n,h)$ matrices.

\begin{conj} \label{cir but}
Let $n$ and $h$ be positive integers. Then there exists a $\BH(\mathbb{Z}_n,h)$ matrix if and only if
\begin{itemize}
\item[(i)] $\nu_p(h)\geq \lceil \nu_p(n)/2 \rceil$ for every prime divisor $p$ of $n$, and
\item[(ii)] $\nu_2(h) \geq 2$ if $n\equiv 2 \pmod{4}$.
\end{itemize}
\end{conj}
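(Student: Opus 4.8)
\emph{A plan toward Conjecture~\ref{cir but}.} The forward (\emph{if}) implication is already settled and I would dispose of it first. Condition (i) is precisely the statement that $n$ divides $(h,n)^2$: writing $e_p=\nu_p(n)$ and $f_p=\nu_p(h)$, the divisibility $n\mid (h,n)^2$ amounts to $e_p\le 2\min(e_p,f_p)$ for every prime $p\mid n$, which for $f_p<e_p$ reduces to $f_p\ge\lceil e_p/2\rceil$. Likewise, for $G=\Z_n$ with $n\equiv 2\pmod 4$ we have $\nu_2(n)=1$ odd and $\Z_n$ admits a direct factor $\Z_2$, so (ii) is exactly the hypothesis of Result~\ref{TaiSchmidt}(ii). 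Hence whenever (i) and (ii) hold, Result~\ref{TaiSchmidt} produces a $\BH(\Z_n,h)$ matrix, and sufficiency is complete.

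For the reverse (\emph{only if}) implication I would argue by contradiction through the group-ring formalism. Suppose a $\BH(\Z_n,h)$ matrix exists, let $D=\sum_{g}\zeta_h^{a_g}g\in\Z[\zeta_h][\Z_n]$ be the associated group-ring element, so that $DD^{(-1)}=n$ by Result~\ref{equiv}. Fix a prime $p\mid n$ for which (i) is assumed to fail, choose a character $\chi$ of $\Z_n$ whose order is divisible by the full $p$-part of $n$, and set $m=\lcm(n,h)$ and $X=\chi(D)\in\Z[\zeta_m]$, so that $X\bar X=n$. The plan is to combine three ingredients. A suitable form of Ma's lemma (Result~\ref{ma lemma}), applied after isolating the part of $h$ coprime to $p$, forces a rigid $p$-adic decomposition of $D$ and thereby limits how the roots of unity $\zeta_h$ can contribute to the $p$-part of $X$. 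The self-conjugacy divisibility of Result~\ref{selfconjugacy divisibility} then pins down the exact power of the Gauss-sum-type element $\prod_i\Theta_i$ dividing $X$. Finally, the field descent of Results~\ref{field descent schmidt} and~\ref{weil field descent} replaces $X$ by some $X\zeta_m^j\in\Z[\zeta_k]$ supported on relatively few roots of unity. Feeding that sparse representation into the norm bound of Result~\ref{bound for cyclotomic integer} should yield an inequality on $n$ that, precisely when $\nu_p(h)<\lceil\nu_p(n)/2\rceil$, is incompatible with $X\bar X=n$; this would force (i), while a parallel argument anchored on the Sylvester-type obstructions of Result~\ref{sylvester conditions} would force (ii) in the case $n\equiv 2\pmod 4$.

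The decisive difficulty is that this program cannot succeed in full generality, because the conjecture \emph{contains} a famous open problem. Taking $h=2$, conditions (i) and (ii) collapse to $n\in\{1,4\}$: for any odd prime $p\mid n$, condition (i) demands $\nu_p(2)=0\ge\lceil\nu_p(n)/2\rceil$ and hence $p\nmid n$, while at $p=2$ it allows only $\nu_2(n)\le 2$ and (ii) excludes $\nu_2(n)=1$. Thus the $h=2$ instance of the \emph{only if} direction is exactly Ryser's circulant Hadamard conjecture, open for more than fifty years, so a complete proof is out of reach and the realistic target is a body of partial necessary conditions ruling out infinite families and clearing as many small pairs $(n,h)$ as possible. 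The genuine obstruction inside the descent argument is the non-self-conjugate case: when every prime divisor of the relevant part of $n$ is self-conjugate modulo $m$, Result~\ref{selfconjugacy divisibility} delivers strong divisibility and the norm bound bites, but when self-conjugacy fails the descended field $\Z[\zeta_k]$ stays large, the support of $X\zeta_m^j$ remains wide, and the inequality coming from Result~\ref{bound for cyclotomic integer} is too weak to contradict $X\bar X=n$. Sharpening the norm estimate in exactly these non-self-conjugate situations is where the real work, and the current limits of the method, lie.
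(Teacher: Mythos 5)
Your assessment is exactly right and coincides with the paper's own stance: the statement is a conjecture the paper explicitly leaves open, the sufficiency direction follows from Result \ref{TaiSchmidt} precisely as you argue (condition (i) being equivalent to $n\mid (h,n)^2$, condition (ii) matching the $\mathbb{Z}_2$-direct-factor hypothesis for cyclic groups), and the $h=2$ instance of necessity is Ryser's circulant Hadamard conjecture, as the paper also notes. Your proposed program for partial necessary conditions --- group-ring formulation via Result \ref{equiv}, field descent via Results \ref{field descent schmidt} and \ref{weil field descent}, Ma's lemma (Result \ref{ma lemma}), self-conjugacy divisibility (Result \ref{selfconjugacy divisibility}), and the norm bound of Result \ref{bound for cyclotomic integer} --- is exactly the toolkit the paper deploys in Theorems \ref{inequality relation between n and h}, \ref{divisibility selfconjugate} and \ref{relation selfconjugate part}, including your correct diagnosis that the method loses its force when self-conjugacy fails.
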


A special case of Conjecture \ref{cir but} is the circulant Hadamard matrix conjecture which was mentioned in the introduction. Unfortunately, we are far from proving Conjecture \ref{cir but}. In support of it, we prove that $n \leq (h,n)^2$ under certain restrictions on $n$ and $h$.

\medskip

\begin{thm} \label{inequality relation between n and h}
Let $n$ and $h$ be positive integers and put $m=\lcm(n,h)$. Suppose that a $\BH(\mathbb{Z}_n,h)$ matrix exists. Furthermore, assume that for any prime divisor $p$ of $n$, we have
\begin{itemize}
\item[(i)] $p$ divides $h$, and
\item[(ii)] $q^{\ord_{\widetilde{m}_q}(q)} \not\equiv 1 \pmod{p^{\nu_p(h)+1}} $ for any prime divisor $q\neq p$ of $n$,
\end{itemize}
where $\widetilde{m}_q$ is defined in Definition \ref{field descent F(m,n)}. Then
\begin{equation} \label{inequality}
n\leq (h,n)^2.
\end{equation}
\end{thm}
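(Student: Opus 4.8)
The plan is to pass to the group ring, evaluate at a single faithful character, push the field descent all the way down to $\Q(\zeta_h)$, and then exploit the fact that in $\mathbb{Z}[\zeta_h]$ the unknown phase exponents $a_g$ drop out of the surviving terms, leaving a count that is forced to equal $(h,n)$.

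First I would set up the algebra. By Result~\ref{equiv} the matrix gives $D=\sum_{g\in\Z_n}\zeta_h^{a_g}g\in\mathbb{Z}[\zeta_h][\Z_n]$ with $DD^{(-1)}=n$. Fix the faithful character $\chi$ of $\Z_n$ with $\chi(1)=\zeta_n$ and put $X=\chi(D)$. Writing $\zeta_h=\zeta_m^{m/h}$ and $\zeta_n=\zeta_m^{m/n}$ gives $X=\sum_{g=0}^{n-1}\zeta_m^{e_g}\in\mathbb{Z}[\zeta_m]$ with $e_g=a_g(m/h)+g(m/n)$, and applying $\chi$ to $DD^{(-1)}=n$ yields $X\bar X=n$. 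Collecting equal powers, $X=\sum_{l=0}^{m-1}c_l\zeta_m^l$ with all $c_l\ge 0$ and $\sum_l c_l=n$.

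The point where the hypotheses enter is the claim $F(m,n)\mid h$. By (i) every prime divisor of $n$ divides $h$, so the prime divisors of $m=\lcm(n,h)$ are exactly those of $h$; it therefore suffices to bound the exponent $b_p$ of each $p\mid h$ in $F(m,n)$ by $\nu_p(h)$. If $p\nmid n$ this is immediate from clause (b) of Definition~\ref{field descent F(m,n)} taken with $b_p=\nu_p(h)=\nu_p(m)$. If $p\mid n$ is odd, clause (a) settles the pair $(p,p)$ for every $b_p\ge 1$, while for each remaining prime $q\mid n$ the hypothesis (ii) is literally the assertion that clause (c) holds at $b_p=\nu_p(h)$; hence $b_p\le\nu_p(h)$. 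Thus $F(m,n)\mid h\mid m$, and Result~\ref{weil field descent} applied with $k=h$ gives an integer $j$ with $X\zeta_m^j=\sum_{i=0}^{h-1}d_i\zeta_h^i$, where $d_i=c_{im/h-j}$, together with the inequality $n\le\bigl(\sum_{i}|d_i|\bigr)^2$.

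The crux is then a counting identity. Since $c_l\ge 0$, the quantity $\sum_i|d_i|$ counts exactly those $g$ with $e_g\equiv -j\pmod{m/h}$; but $m/h$ divides $a_g(m/h)$, so this congruence reduces to $g(m/n)\equiv -j\pmod{m/h}$, a condition free of the unknown exponents. The homomorphism $\Z_n\to\Z_{m/h}$, $g\mapsto g(m/n)$, is onto because $\gcd(m/n,m/h)=1$ (as $\nu_p(m)=\max(\nu_p(n),\nu_p(h))$ for every $p$), so each fiber has exactly $n/(m/h)=\gcd(n,h)=(h,n)$ elements. Hence $\sum_i|d_i|=(h,n)$ and $n=|X\zeta_m^j|^2\le(h,n)^2$. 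The one case where this is not automatic is the prime $2$: when $2\mid n$, $\nu_2(h)=1$ and $\nu_2(n)\ge 2$, clause (a) forbids $b_2=1$ and clause (b) forces $b_2=\nu_2(n)$, so $\nu_2(F(m,n))=2>\nu_2(h)$ and $F(m,n)\nmid h$; descending only to $\mathbb{Z}[\zeta_{2h}]$ doubles the fiber count to $2(h,n)$ and degrades the bound to $n\le 4(h,n)^2$. The example $n=4,h=2$ (where $n=(h,n)^2$ with equality, yet $X=\chi(D)$ admits no descent) shows that Result~\ref{weil field descent} alone cannot close this case, so I expect the $2$-adic case to be the main obstacle. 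I would resolve it by injecting the self-conjugacy divisibility of Result~\ref{selfconjugacy divisibility}: the factor $1-\zeta_4$ attached to the $2$-part divides $X$, and peeling it off (it has norm $2$) absorbs precisely the missing factor of $2$ before the counting argument is run, restoring $n\le(h,n)^2$.
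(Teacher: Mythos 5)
Your main line coincides with the paper's proof: pass to $D\in\Z[\zeta_h][\Z_n]$ with $DD^{(-1)}=n$ via Result \ref{equiv}, evaluate at a faithful character, check from Definition \ref{field descent F(m,n)} that hypotheses (i) and (ii) force $F(m,n)\mid h$, descend into $\Z[\zeta_h]$ via Result \ref{weil field descent} with $k=h$, and count the surviving terms: the congruence $a_g(m/h)+g(m/n)\equiv -j\pmod{m/h}$ is free of the unknown $a_g$, and its solution set is a fiber of size $n/(m/h)=(h,n)$, giving $n\le(h,n)^2$. The paper normalizes $D$ by $\zeta_h^{\alpha}g^{\beta}$ so that the rotation $\zeta_m^j$ disappears before counting, whereas you keep $j$ explicit; that difference is cosmetic. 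Your observation about the prime $2$ is also correct, and it exposes a gap in the paper's own argument: when $2\mid n$, $\nu_2(n)\ge 2$ and $\nu_2(h)=1$, the pair $(p_i,q)=(2,2)$ in Definition \ref{field descent F(m,n)} satisfies none of (a), (b), (c) at $b_2=1$, so $\nu_2(F(m,n))=2$ and $F(m,n)\nmid h$. The theorem's hypotheses do not exclude this case (for $n=4$, $h=2$ condition (i) holds and (ii) is vacuous), yet the paper simply asserts $h\equiv 0\pmod{F(m,n)}$ ``by the definition of $F(m,n)$ and by the conditions (i) and (ii)''.

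The genuine defect in your write-up is the proposed repair of that case, which does not work. Result \ref{selfconjugacy divisibility} yields the factor $\Theta=1-\zeta_4$ dividing $X$ only if $2$ divides $k$, the square-free part of the largest divisor $u$ of $n$ that is self-conjugate modulo $m$; this requires both that $2$ be self-conjugate modulo $m$ (some power $2^j\equiv -1$ modulo the odd part of $m$) and that $\nu_2(n)$ be odd. Neither is implied by hypotheses (i) and (ii), so the premise ``$1-\zeta_4$ divides $X$'' is unjustified. Moreover, even where the divisibility does hold, writing $X=(1-\zeta_4)X'$ destroys exactly the structure your counting argument needs: $X'$ satisfies $|X'|^2=n/2$ but is no longer a sum of $n$ roots of unity with nonnegative integer coefficients, so the identity $\sum_i|d_i|=(h,n)$ has no analogue for $X'$, and no inequality of the form $n\le(h,n)^2$ follows from ``peeling off'' a factor of absolute value $\sqrt{2}$. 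Hence the exceptional case $\nu_2(h)=1<\nu_2(n)$ remains unproved in your proposal --- exactly as it silently remains unproved in the paper; outside that case, your argument is complete and agrees with the published one.
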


\begin{proof}
Let $H$ be a $\BH(\mathbb{Z}_n,h)$ matrix and let its first row be $(\zeta_h^{a_0},\dots,\zeta_h^{a_{n-1}})$, $a_i \in \mathbb{Z}$. Let $g$ be a generator of the cyclic group $\mathbb{Z}_n$ and let $D=\sum_{i=0}^{n-1}\zeta_h^{a_i}g^i$ be an element of the group ring $\mathbb{Z}[\zeta_h][\mathbb{Z}_n]$. We obtain, by Result \ref{equiv},
\begin{equation} \label{difference set equation}
DD^{(-1)}=n.
\end{equation}
Let $\chi$ be a character of $\mathbb{Z}_n$ such that $\chi(g)=\zeta_n$. We have $\chi(D)=\sum_{i=0}^{n-1}\zeta_h^{a_i}\zeta_n^i \in \mathbb{Z}[\zeta_m].$ 
By Result \ref{field descent schmidt}, there exists an integer $j$ such that $\chi(D)\zeta_m^j \in \mathbb{Z}[\zeta_{F(m,n)}]$. Since $\gcd(m/h,m/n)=1$, there exist integers $\alpha$ and $\beta$ such that $\alpha m/h+\beta m/n=j$. Replacing $D$ by $D\zeta_h^{\alpha}g^{\beta}$, if necessary, we can assume that 
$$X=\chi(D\zeta_h^{\alpha}g^{\beta})=\chi(D)\zeta_n^{\alpha n/h+\beta}=\chi(D)\zeta_m^j \in \mathbb{Z}[\zeta_{F(m,n)}].$$ 
Thus $X \in \mathbb{Z}[\zeta_h]$, as $h\equiv 0 \pmod{F(m,n)}$ by the definition of $F(m,n)$ and by the conditions $(i)$ and $(ii)$. Note that with $h$ in the place of $k$, the condition (\ref{descent}) in Result \ref{weil field descent} is satisfied. To apply the inequality (\ref{relation n and k}), we need to find the exponents in
$$X=\sum_{i=0}^{n-1} \zeta_m^{a_i(m/h)+i(m/n)}$$
which are divisible by $m/h$. These are the ones containing $i$ such that $n/(h,n)$ divides $i$. There are $(h,n)$ such exponents, as $0\leq i \leq n-1$. Recall that $X \in \mathbb{Z}[\zeta_h]$. When expressing $X$ in the form $\sum_{i=0}^{h-1}d_i\zeta_h^i$, $d_i \in \mathbb{Z}$, we have $d_i\geq 0$ for all $i$ and $\sum_{i=0}^{h-1} d_i=(h,n)$. We obtain, by (\ref{relation n and k}),
$$n\leq \left(\sum_{i=0}^{h-1} d_i \right)^2=(h,n)^2.$$
\end{proof}

We remark that in the case $n=p^a$ is a prime power, the inequality (\ref{inequality}) implies that $\nu_p(h) \geq \lceil \nu_p(n)/2\rceil$, which is exactly the second statement in Result \ref{TaiSchmidt}. Moreover in this case, the condition $(ii)$ holds automatically and the condition $(i)$ can be obtained using an argument from the field-descent method.

\bigskip

\noindent For the next result, we recall the self-conjugacy concept in Definition \ref{selfconjugate definition}. We concentrate on prime divisors of $n$ which are self-conjugate modulo $m=\lcm(n,h)$.

\begin{thm} \label{divisibility selfconjugate}
Let $n$ and $h$ be positive integers and put $m=\lcm(n,h)$. If a $\BH(\mathbb{Z}_n,h)$ matrix exists, then any prime divisor of $n$ which is self-conjugate modulo $m$ divides $h$.
\end{thm}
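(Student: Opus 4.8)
The plan is to argue by contradiction. Suppose $p$ is a prime divisor of $n$ that is self-conjugate modulo $m$ but $p\nmid h$; I will derive a contradiction to the existence of the matrix. Fix the element $D=\sum_{i=0}^{n-1}\zeta_h^{a_i}g^i\in\mathbb{Z}[\zeta_h][\mathbb{Z}_n]$ attached to the matrix, so that $DD^{(-1)}=n$ by Result~\ref{equiv}. Write $a=\nu_p(n)$, let $S$ be the (cyclic) Sylow $p$-subgroup of $\mathbb{Z}_n$, of order $p^a$, and let $P\le S$ be the subgroup of order $p$. For any character $\chi$ of $\mathbb{Z}_n$ whose order is divisible by $p^a$ we have $X:=\chi(D)\in\mathbb{Z}[\zeta_m]$ and $|X|^2=n$. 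The first step is to translate the self-conjugacy hypothesis into a divisibility statement. Writing $m=p^cm'$ as in Definition~\ref{selfconjugate definition}, the relation $p^j\equiv-1\pmod{m'}$ says that complex conjugation agrees with a power of the Frobenius of $p$ on $\mathbb{Q}(\zeta_{m'})$, hence lies in the decomposition group of every prime $\mathfrak q$ of $\mathbb{Z}[\zeta_m]$ above $p$, and therefore fixes $\mathfrak q$. Consequently $v_{\mathfrak q}(X)=v_{\mathfrak q}(\bar X)=\tfrac12 v_{\mathfrak q}(n)=\tfrac12\,a\,\varphi(p^a)$, where I use that $p\nmid h$ forces $\nu_p(m)=a$ and ramification index $\varphi(p^a)$.

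I then split according to the parity of $a$. When $a$ is even, the displayed valuation gives $\chi(D)\equiv0\pmod{p^{a/2}}$ in $\mathbb{Z}[\zeta_m]$ for every $\chi$ of order divisible by $p^a=|S|$, so Ma's lemma (Result~\ref{ma lemma}) applies with $t=a/2\ge1$ and yields $D=p^{a/2}X_1+PY_1$ with $X_1,Y_1\in\mathbb{Z}[\zeta_h][\mathbb{Z}_n]$. The coefficients of $PY_1$ are constant on each coset of $P$, so for $g,g'$ in a common coset the roots of unity $\zeta_h^{a_g},\zeta_h^{a_{g'}}$ are congruent modulo $p^{a/2}$, hence modulo any prime $\mathfrak p$ of $\mathbb{Z}[\zeta_h]$ above $p$. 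Because $p\nmid h$, distinct $h$th roots of unity remain distinct modulo $\mathfrak p$, so $\zeta_h^{a_g}=\zeta_h^{a_{g'}}$; that is, $D$ is constant on $P$-cosets and $D=EP$ for some $E$. Substituting into $DD^{(-1)}=n$ and using $P^{(-1)}=P$ and $P^2=pP$ gives $n\cdot 1_{\mathbb{Z}_n}=p\,EE^{(-1)}P$, whose right-hand side has equal coefficients at $1_{\mathbb{Z}_n}$ and at any nontrivial element of $P$; comparing these two coefficients forces $n=0$, the desired contradiction.

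It remains to treat odd $a$, which I intend to handle through Result~\ref{Brock}. Since $p\nmid h$ we have $h\mid m'$, so the self-conjugacy relation $p^j\equiv-1\pmod{m'}$ reduces to $p^j\equiv-1\pmod h$; together with $p\nmid h$ this places $p$ in exactly the situation excluded by Brock's theorem. As $a=\nu_p(n)$ is odd, $p$ divides the square-free part of $n$, contradicting Result~\ref{Brock}. Combining the two cases completes the contradiction and shows $p\mid h$.

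The hard part is precisely the odd-valuation case. There the self-conjugacy only yields divisibility of $\chi(D)$ by the Gauss sum $\Theta_p$ of Result~\ref{selfconjugacy divisibility} rather than by a rational power of $p$, so the clean reduction $D=EP$ breaks down: convolution with the group-ring lift of $\Theta_p$ no longer forces the coefficients of $D$ to be constant on $P$-cosets. Routing through Brock's theorem circumvents this, but Result~\ref{Brock} requires the square-free part of $n$ to be odd; the remaining configurations, namely when $\nu_2(n)$ is odd and the parallel treatment of a self-conjugate prime $p=2$, will need a separate $2$-adic argument, and checking that these are genuinely covered is the delicate point of the proof.
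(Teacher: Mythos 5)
Your even-valuation case is sound, and it essentially parallels the paper's own argument: self-conjugacy gives $\chi(D)\equiv 0\pmod{p^{t}}$ for all characters, Ma's lemma (Result~\ref{ma lemma}) gives $D=p^{t}X_1+PY_1$, and comparing coefficients on $P$-cosets forces the coefficients of $D$ to be constant on those cosets. Your endgame is in fact a little cleaner than the paper's: since $p\nmid h$, reduction modulo a prime $\mathfrak p$ above $p$ is injective on $h$th roots of unity, so congruence immediately gives equality, and then $D=EP$ contradicts $DD^{(-1)}=n$ (the paper instead keeps the possibility of congruent-but-unequal coefficients and kills it with a norm estimate forcing $p^t\le 2$). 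The genuine gap is exactly where you flagged it: the odd-valuation case. Brock's theorem (Result~\ref{Brock}) requires the square-free part of $n$ to be \emph{odd}, so your route fails whenever $\nu_2(n)$ is odd --- not only for a self-conjugate prime $p=2$, but also for an odd prime $p$ with $\nu_p(n)$ odd inside such an $n$ (e.g.\ $n\equiv 2\pmod 4$). These configurations sit squarely inside the theorem's hypotheses, and no ``separate $2$-adic argument'' is supplied, so as written the proof is incomplete.

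The missing idea, which is how the paper disposes of the odd case in one stroke, is to apply Result~\ref{selfconjugacy divisibility} \emph{globally} to $Y=\chi_0(D)\in\mathbb{Z}[\zeta_h]$, which satisfies $|Y|^2=n$. Since $p$ is self-conjugate modulo $m$, $h\mid m$ and $p\nmid h$, the prime $p$ is self-conjugate modulo $h$; hence $p^{\nu_p(n)}$ divides the largest divisor $u$ of $n$ that is self-conjugate modulo $h$. Result~\ref{selfconjugacy divisibility} then asserts that the square-free part $k$ of $u$ divides the modulus, i.e.\ $k\mid h$. As $p\nmid h$, this forces $p\nmid k$, and therefore $\nu_p(n)$ is even. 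In other words, under the standing assumption $p\nmid h$ the odd-valuation case simply cannot occur, so no appeal to Brock and no $2$-adic case analysis are needed. Your valuation computation extracts only the local half of Result~\ref{selfconjugacy divisibility} (divisibility of $\chi(D)$ at the primes above $p$) and misses this global parity constraint $k\mid h$; once you add it, your argument reduces to the even case you already handled, and the proof closes.
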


\begin{proof}
Suppose that there exists a prime divisor $p$ of $n$ which is self-conjugate modulo $m$ and does not divide $h$. Letting $g$ be a generator of $\mathbb{Z}_n$ and defining $D=\sum_{i=0}^{n-1} \zeta_h^{a_i}g^i$ as in the proof of Theorem \ref{inequality relation between n and h}, we have $DD^{(-1)}=n.$
For any character $\chi$ of $\mathbb{Z}_n$, we have
 \begin{equation} \label{weil equation D}
 |\chi(D)|^2=n, \ \chi(D) \in \mathbb{Z}[\zeta_m].
 \end{equation}
Applying $\chi_0$ to (\ref{weil equation D}) and putting $Y=\chi_0(D) \in \mathbb{Z}[\zeta_h]$, we obtain $|Y|^2=n$. Note that $p$ is self-conjugate modulo $m$ and $h$ divides $m$, so $p$ is also self-conjugate modulo $h$. By Result \ref{selfconjugacy divisibility} and by the condition $p \nmid h$, we obtain $\nu_p(n)=2t$ for some $t\in \mathbb{Z}^+$. We also have $\nu_p(m)=2t$, as $m=\lcm(n,h)$ and $h\not\equiv 0\pmod{p}$. Moreover, since $|\chi(D)|^2=n$ and $\chi(D)\in \mathbb{Z}[\zeta_m]$, Result \ref{selfconjugacy divisibility} implies $\chi(D)\equiv 0\pmod{p^t}$ for any character $\chi$ of $\mathbb{Z}_n$. By Result \ref{ma lemma}, we obtain
\begin{equation} \label{form of D 2}
D=p^tX+PY,
\end{equation}
where $P$ is the cyclic subgroup of $\mathbb{Z}_n$ of order $p$, and $X, Y \in \mathbb{Z}[\zeta_h][\mathbb{Z}_n]$.
As $kP=P$ for any $k\in P$, we can assume that no two elements in the support of $Y$ are contained in the same coset of $P$ in $\mathbb{Z}_n$.
Comparing the coefficients on a fixed coset of $P$, the equation (\ref{form of D 2}) implies
$$\zeta_h^{a_i}\equiv \zeta_h^{a_{i+n/p}} \equiv \cdots \equiv \zeta_h^{a_{i+(p-1)n/p}} \pmod{p^t} \ \text{for any} \ 0\leq i \leq n/p-1.$$
Hence $\zeta_h^{a_j}\equiv \zeta_h^{a_{j+n/p}} \pmod{p^t}$ for all $j$. Suppose that $\zeta_h^{a_j}=\zeta_h^{a_{j+n/p}}$ for all $j$. We have $D=PZ$, where $Z=\sum_{i=0}^{n/p-1} \zeta_h^{a_i}g^i$. Let $\tau$ be a character of $\mathbb{Z}_n$ such that $\tau(g)=\zeta_n$. We have 
$$\tau(P)=\sum_{i=0}^{p-1}\tau(g^{in/p})=\sum_{i=0}^{p-1}\zeta_p^i=0,$$ 
so $\tau(D)=0$, contradicting  $(\ref{weil equation D})$. Therefore, there exists $j$ such that $\zeta_h^{a_j} \neq \zeta_h^{a_{j+n/p}}$. 

The condition $\zeta_h^{a_j}\equiv \zeta_h^{a_{j+n/p}} \pmod{p^t}$ implies that $t=1$ and $p=2$, as $|\zeta_h^{a_j}-\zeta_h^{a_{j+n/p}}| \leq 2$. The congruence $\zeta_h^{a_j}\equiv \zeta_h^{a_{j+n/2}} \pmod{2}$ holds only when $2 \mid h$ and $a_{j+n/2}=a_j+h/2$, contradicting with the assumption that $p$ does not divide $h$.
\end{proof}

\begin{cor} \label{necessary condition in the circulant case with (n,h)=1}
If $n$ and $h$ are coprime positive integers such that a $\BH(\mathbb{Z}_n,h)$ matrix exists, then no prime divisor of $n$ is self-conjugate modulo $nh$.
\end{cor}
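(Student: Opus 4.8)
The plan is to derive this directly from Theorem \ref{divisibility selfconjugate}, since the coprimality hypothesis $\gcd(n,h)=1$ collapses the quantity $m=\lcm(n,h)$ appearing there into the product $nh$. First I would record the elementary identity $m=\lcm(n,h)=nh$, which holds precisely because $n$ and $h$ are coprime. This is the only computational input, and it lets me identify self-conjugacy modulo $m$ in the theorem with self-conjugacy modulo $nh$ in the statement of the corollary.

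Next I would argue by contradiction. Suppose some prime divisor $p$ of $n$ is self-conjugate modulo $nh=m$. Applying Theorem \ref{divisibility selfconjugate} to the assumed $\BH(\mathbb{Z}_n,h)$ matrix, every prime divisor of $n$ that is self-conjugate modulo $m$ must divide $h$; in particular $p\mid h$. But $p\mid n$ together with $\gcd(n,h)=1$ forces $p\nmid h$, a contradiction. Hence no prime divisor of $n$ can be self-conjugate modulo $nh$, which is exactly the claim.

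There is essentially no genuine obstacle here: the corollary is an immediate specialization of the theorem, so the work has already been done in proving Theorem \ref{divisibility selfconjugate}. The only point requiring any care is the bookkeeping identity $m=nh$ under coprimality, together with the observation that the theorem's conclusion $p\mid h$ is incompatible with $p\mid n$ once $n$ and $h$ share no common factor. Accordingly, I would keep the write-up to a few lines, invoking Theorem \ref{divisibility selfconjugate} as a black box.
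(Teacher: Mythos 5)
Your proposal is correct and is exactly the argument the paper intends: the corollary is stated without proof as an immediate specialization of Theorem \ref{divisibility selfconjugate}, using $\lcm(n,h)=nh$ under coprimality and the incompatibility of the theorem's conclusion $p\mid h$ with $p\mid n$ and $\gcd(n,h)=1$. Nothing is missing.
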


\medskip

\noindent In preparation for the next result of this section, we start with the following definition of delta function.

\begin{defi} \label{delta}
Let $s$ and $t$ be any two complex numbers. We define the function $\delta_{ts}$ as follows.
$$\delta_{ts}=\begin{cases} 0 \ \text{if} \ t\neq s, \\ 1 \ \text{if} \ t=s. \end{cases}$$
\end{defi}

\noindent The following result is taken from \cite[Lemma $2.5$]{sc3}.

\begin{re} \label{integral basis}
Let $m\in \mathbb{Z}^+$ and let $k$ be a divisor of $m$. Let $t$ and $s$ be the numbers of prime divisors of $m$ and $k$, respectively. Write $X=\sum_{i=0}^{m-1}a_i\zeta_m^i$, where $a_i \in \mathbb{Z}$ for all $i$. Then there exists an integral basis $\mathcal{B}_{m,k}$ of $\mathbb{Q}(\zeta_m)$ over $\mathbb{Q}(\zeta_k)$ which contains $\varphi(m/k)$ roots of unity. Furthermore, if $0 \leq a_i \leq C$ for all $i$, then we can express $X$ as
\begin{equation} \label{integral expression of X over subfield}
X=\sum_{x \in \mathcal{B}_{m,k}} x\left(\sum_{j=0}^{k-1}c_{xj}\zeta_k^j\right),  \ |c_{xj}| \leq 2^{t-s-1+\delta_{ts}}C \ \text{for all} \ x,j.
\end{equation}
\end{re}

\medskip

\begin{thm} \label{relation selfconjugate part}
Let $n$ and $h$ be positive integers and put $m=\lcm(n,h)$. Let $u$ be the largest divisor of $n$ which is self-conjugate modulo $m$. Write $u=w^2k$, where $k$ is the square-free part of $u$. Let $t$ and $r$ be the numbers of prime divisors of $h$ and $k$, respectively. Suppose that a $\BH(\mathbb{Z}_n,h)$ matrix exists. We obtain the following.
\begin{itemize}
\item[(a)] If $k\not\equiv 0\pmod{2}$, then 
\begin{equation} \label{inequality1}
w \leq  2^{t-r-1+\delta_{rt}}(h,u)\sqrt{\frac{k}{\varphi(k)}}.
\end{equation}
\item[(b)] If $k\equiv 0 \pmod{2}$, then
\begin{equation} \label{inequality2}
w \leq  2^{t-r-1}(h,u)\sqrt{\frac{2k}{\varphi(k)}}.
\end{equation}
\end{itemize}
 \end{thm}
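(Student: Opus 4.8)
The plan is to reduce the existence of the $\BH(\mathbb{Z}_n,h)$ matrix to a single cyclotomic-integer equation and then play the divisibility of Result \ref{selfconjugacy divisibility} against the norm bound of Result \ref{bound for cyclotomic integer}. Taking a generator $g$ of $\mathbb{Z}_n$ and $D=\sum_{i=0}^{n-1}\zeta_h^{a_i}g^i$, Result \ref{equiv} gives $DD^{(-1)}=n$, so for a suitable character $\chi$ the element $X=\chi(D)$ satisfies $|X|^2=n$ and $X\in\mathbb{Z}[\zeta_m]$. Writing $u=w^2k$ and applying Result \ref{selfconjugacy divisibility} yields $k\mid m$ together with $X\equiv 0\pmod{w\prod_{i=1}^r\Theta_i}$. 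By Theorem \ref{divisibility selfconjugate} every prime divisor of $k$ divides $h$, so $k\mid h$ and $\mathbb{Q}(\zeta_k)\subseteq\mathbb{Q}(\zeta_h)$, which is what makes $\mathbb{Q}(\zeta_k)$ the natural base field for the descent.

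The heart of the argument is a norm estimate over $\mathbb{Q}(\zeta_k)$. Since each $\Theta_i$ is a quadratic Gauss sum (or $1-\zeta_4$ when $p_i=2$), one has $\left|\prod_{i=1}^r\Theta_i\right|^2=\prod_i|\Theta_i|^2=k$; moreover both $\prod_i\Theta_i$ and $X$ are Weil numbers, i.e. every Galois conjugate of $\prod_i\Theta_i$ (resp.\ $X$) has absolute value $\sqrt k$ (resp.\ $\sqrt n$), because complex conjugation commutes with the abelian Galois action while $X\bar X=n$ and $(\prod\Theta_i)\overline{(\prod\Theta_i)}=k$ are rational. I would expand $X=\sum_{x\in\mathcal{B}_{m,k}}x\lambda_x$ in the integral basis of Result \ref{integral basis}, with $\lambda_x\in\mathbb{Z}[\zeta_k]$; comparing with the same expansion of $Z=X/(w\prod\Theta_i)$ shows that each coordinate factors as $\lambda_x=w\bigl(\prod_i\Theta_i\bigr)\mu_x$ with $\mu_x\in\mathbb{Z}[\zeta_k]$. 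Fixing an $x$ with $\mu_x\neq 0$ and writing $N=N_{\mathbb{Q}(\zeta_k)/\mathbb{Q}}$, the Weil property gives $|N(\prod\Theta_i)|=k^{\varphi(k)/2}$ and $|N(\mu_x)|\geq 1$, hence $|N(\lambda_x)|\geq w^{\varphi(k)}k^{\varphi(k)/2}$. On the other hand Result \ref{bound for cyclotomic integer} bounds $|N(\lambda_x)|$ above by $\bigl(\tfrac{k}{\varphi(k)}\sum_j c_{xj}^2\bigr)^{\varphi(k)/2}$, where $\lambda_x=\sum_j c_{xj}\zeta_k^j$. Combining the two estimates and taking $\varphi(k)$-th roots collapses everything to the clean inequality $w^2\leq\frac{1}{\varphi(k)}\sum_j c_{xj}^2$.

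It then remains to bound $\sum_j c_{xj}^2\leq k\max_j c_{xj}^2$ via the coefficient estimate $|c_{xj}|\leq 2^{t-r-1+\delta_{rt}}(h,u)$ of Result \ref{integral basis}, which would give (\ref{inequality1}) at once. I expect this final step to be the main obstacle. The difficulty is that Result \ref{integral basis} is phrased with the prime count of the full modulus $m$ and with $C$ equal to the largest multiplicity among the $\zeta_m$-coefficients of $X$, which for the faithful character is only $(h,n)$ and sits over $\mathbb{Q}(\zeta_m)$ rather than over a field with exactly the $t$ prime divisors of $h$. Sharpening the prime count to $t$ and the coefficient bound to $(h,u)$ is where the work lies: it should follow from a preliminary field descent (Result \ref{field descent schmidt}) that strips the non-self-conjugate prime divisors of $n$, which by Theorem \ref{divisibility selfconjugate} are precisely the primes not forced into $h$, so they reduce to first powers and collapse into $(h,u)$ surviving terms, in direct analogy with the count $\sum d_i=(h,n)$ in the proof of Theorem \ref{inequality relation between n and h}. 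Finally, case (b) demands separate care because when $k$ is even the factor $\Theta_i=1-\zeta_4$ lies in $\mathbb{Q}(\zeta_{2k})$ rather than in $\mathbb{Q}(\zeta_k)$ (as $4\nmid k$); re-running the norm computation over a field containing $\zeta_4$ doubles the relevant degree and is what trades the $\delta_{rt}$ in the exponent for the extra factor $\sqrt{2}$ inside the radical, yielding (\ref{inequality2}). Reconciling these constants precisely in the even case is the most delicate bookkeeping in the argument.
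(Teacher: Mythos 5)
Your skeleton — divisibility of each $\mathbb{Q}(\zeta_k)$-coordinate by $w\prod_{i=1}^r\Theta_i$, a norm lower bound $(w^2k)^{\varphi(k)/2}$ against the upper bound of Result \ref{bound for cyclotomic integer}, collapsing to $w^2\leq\frac{1}{\varphi(k)}\sum_j c_{xj}^2$ — is exactly the paper's mechanism, and that part of your argument is sound. But the step you yourself flag as ``where the work lies'' is a genuine gap, and your proposed repair does not close it. Working with the full character value $X=\chi(D)=\sum_{i=0}^{n-1}\zeta_h^{a_i}\zeta_n^i\in\mathbb{Z}[\zeta_m]$, Result \ref{integral basis} only yields $|c_{xj}|\leq 2^{T-r-1+\delta_{Tr}}(h,n)$, where $T$ is the number of prime divisors of $m$; here $T>t$ whenever some prime of $n/u$ does not divide $h$, and $(h,n)$ can exceed $(h,u)$. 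A preliminary application of Result \ref{field descent schmidt} cannot fix either defect: by Definition \ref{field descent F(m,n)}, $F(m,n)$ is a multiple of \emph{every} prime divisor of $m$, so the descent lowers exponents but never removes a prime from the modulus, and the surviving coefficients are a sub-collection of the original ones, still bounded only by $(h,n)$. So your route proves a strictly weaker inequality, with $T$ in place of $t$ and $(h,n)$ in place of $(h,u)$, not the theorem as stated.

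The paper's missing ingredient is a restriction to the subgroup $\mathbb{Z}_u$ \emph{before} any norm estimate. One first translates $D$ so that $X=\tau(D\cap\mathbb{Z}_u)\neq 0$ (possible, since otherwise $\tau(D)=0$, contradicting $|\tau(D)|^2=n$), and then transfers the divisibility to this restricted sum via Result \ref{orthogonal sum}: $\sum_{\chi\in\mathbb{Z}_u^{\perp}}\chi\tau(D)=v\,\tau(D\cap\mathbb{Z}_u)$ with $v=n/u$ coprime to $u$, each summand on the left being divisible by $w\prod_{i=1}^r\Theta_i$ by Result \ref{selfconjugacy divisibility}. The payoff is that $X=\sum_{i=0}^{u-1}\zeta_h^{b_i}\zeta_u^i$ lies in $\mathbb{Z}[\zeta_l]$ with $l=\lcm(h,u)$, which by Theorem \ref{divisibility selfconjugate} has exactly $t$ prime divisors, and a direct count (at most $(h,u)$ pairs $(j,f)$ solve $\zeta_l^i=\zeta_h^j\zeta_u^f$) bounds its coefficients by $(h,u)$; Result \ref{integral basis} then gives precisely the constant in (\ref{inequality1}). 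Your plan for case (b) also diverges from what works: ascending to a field containing $\zeta_4$ is problematic because $4$ need not divide $l$, whereas the paper instead \emph{drops} $\Theta_r=1-\zeta_4$, descends to $\mathbb{Q}(\zeta_{k/2})$, and uses only divisibility by $w\prod_{i=1}^{r-1}\Theta_i$; that is where the factor $\sqrt{2}$ and the disappearance of $\delta_{rt}$ in (\ref{inequality2}) come from.
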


 \begin{proof}
Similar to the proof of Theorem \ref{inequality relation between n and h}, we have $DD^{(-1)}=n$, where $D=\sum_{i=0}^{n-1}\zeta_h^{a_i}g^i \in \mathbb{Z}[\zeta_h][\mathbb{Z}_n]$ and $g$ is a generator of $\mathbb{Z}_n$. Hence for any character $\chi$ of $\mathbb{Z}_n$, we have
\begin{equation} \label{weil equation for D}
 |\chi(D)|^2=n, \ \chi(D)\in \mathbb{Z}[\zeta_m].
\end{equation}
Put $v=n/u$. Note that $(u,v)=1$, as $u$ is the largest divisor of $n$ which is self-conjugate modulo $m$. Let $\tau$ be a character of $\mathbb{Z}_n$ such that $\tau(g^v)=\zeta_u$. The proof of the theorem is divided into several claims.

\medskip

\noindent \underline{Claim 1}. Replacing $D$ by $Dy$, $y \in G$, if necessary, we can assume that
\begin{equation} \label{nonzero}
\tau(D\cap \mathbb{Z}_u)\neq 0.
\end{equation}
Let $R=\{1,g,\dots,g^{v-1}\}$ be the complete set of coset representatives of $\mathbb{Z}_u$ in $\mathbb{Z}_n$. If $\tau(D\cap x\mathbb{Z}_u)=0$ for all $x\in R$, then $\tau(D)=\sum_{x\in R}\tau(D\cap x\mathbb{Z}_u)=0$, contradicting (\ref{weil equation for D}). Thus, there exists $x\in R$ such that $\tau(D\cap x\mathbb{Z}_u)\neq 0$. Note that
$\tau(Dx^{-1}\cap\mathbb{Z}_u)=\tau(x^{-1})\tau(D\cap x\mathbb{Z}_u) \neq 0$. So, replacing $D$ by $Dx^{-1}$ if necessary, we can assume that $\tau(D \cap \mathbb{Z}_u) \neq 0,$ proving (\ref{nonzero}).

\medskip

\noindent \underline{Claim 2}. Write $D\cap \mathbb{Z}_u=\sum_{i=0}^{u-1} \zeta_h^{a_{vi}}g^{vi}=\sum_{i=0}^{u-1}\zeta_h^{b_i}g^{vi}$ and write $k=\prod_{i=1}^r p_i$. Define $\Theta_i$ as in Result \ref{selfconjugacy divisibility}. We have
\begin{equation} \label{divisible}
X=\sum_{i=0}^{u-1}\zeta_h^{b_i}\zeta_u^i\equiv 0\pmod{w\prod_{i=1}^r \Theta_i}.
\end{equation}

\noindent By Result \ref{selfconjugacy divisibility}, we have
 $$\chi(D) \equiv 0 \pmod{w\prod_{i=1}^r \Theta_i}$$ 
 for any character $\chi$ of $\mathbb{Z}_n$. On the other hand, by Result \ref{orthogonal sum}, we have
\begin{equation} \label{orthogonal}
\sum_{\chi \in \mathbb{Z}_u^\perp} \chi\tau(D)=|\mathbb{Z}_u^\perp|\tau(D\cap \mathbb{Z}_u)=v\left(\sum_{i=0}^{u-1}\zeta_h^{b_i}\zeta_u^i\right),
\end{equation}
Since $(u,v)=1$ and each term on the left side of (\ref{orthogonal}) is divisible by $w\prod_{i=1}^r \Theta_i $, we obtain the congruence (\ref{divisible}).

\medskip

\noindent \underline{Claim 3.} Put $l=\lcm(h,u)$, $d=\gcd(h,u)$ and write $X=\sum_{i=0}^{u-1}\zeta_h^{b_i}\zeta_u^i$ in the form
$X=\sum_{i=0}^{l-1}c_i\zeta_l^i, \ c_i \in \mathbb{Z}^+$. Then 
\begin{equation} \label{claim bound for ci}
0\leq c_i \leq d \ \ \text{for any} \ \ i=0,\dots,l-1.
\end{equation}
For a fixed $0\leq i \leq l-1$, let $a$ and $b$ be fixed integers such that
$(u/d) a+(h/d) b=i$. All the solutions $j \pmod{h}$ and $f \pmod{u}$ to $\zeta_l^i=\zeta_h^j\zeta_u^f$ are
$$j=a+l\frac{h}{d}, \ f=b-l\frac{u}{d}, \ 0\leq l \leq d-1.$$
Thus, there are at most $d$ solutions to $\zeta_l^i=\zeta_h^{b_z}\zeta_u^z$, $0\leq z\leq u-1$. The claim is proved.

\medskip

\noindent \underline{Claim 4}. If $k \not\equiv 0 \pmod{2}$, then
$$w \leq 2^{t-r-1+\delta_{tr}}(h,u)\sqrt{\frac{k}{\varphi(k)}}.$$
First, note that $X=\tau(D\cap \mathbb{Z}_u)\neq 0$ by Claim 1. By Theorem \ref{divisibility selfconjugate}, any prime divisor of $u$ divides $h$, so the numbers of prime divisors of $l=\lcm(h,u)$ and $h$ are the same, both are equal to $t$. By Result \ref{integral basis} and (\ref{claim bound for ci}), we can express $X$ as a linear combination of the basis elements in $\mathcal{B}_{l,k}\subset \mathbb{Q}(\zeta_l)$ over $\mathbb{Q}(\zeta_k)$ as follows
$$X=\sum_{x\in \mathcal{B}_{l,k}} x\left( \sum_{j=0}^{k-1} c_{xj} \zeta_k^j \right),$$
where
\begin{equation} \label{inequality for c_xj}
|c_{xj}| \leq 2^{t-r-1+\delta_{tr}}d \ \forall \ x\in \mathcal{B}_{l,k}, \ 0\leq j \leq k-1.
\end{equation}
Note that $X \equiv 0 \pmod{w\prod_{i=1}^r \Theta_i}$ by (\ref{divisible}). Moreover note that $w\prod_{i=1}^r\Theta_i \in \mathbb{Q}(\zeta_k)$, as $\Theta_i\in \mathbb{Q}(\zeta_{p_i})$ for all $i$ by the definition of $\Theta_i$. Hence $\sum_{j=0}^{k-1} c_{xj}\zeta_k^j \equiv 0\pmod{w\prod_{i=1}^r \Theta_i}$ for all $x \in \mathcal{B}_{l,k}$. As $X \neq 0$, there exists $x \in \mathcal{B}_{l,k}$ so that 
\begin{equation} \label{divisible resulting condition}
Y=\sum_{j=0}^{k-1} c_{xj}\zeta_k^j \neq 0 \ \text{and} \ Y \equiv 0 \pmod{w\prod_{i=1}^r \Theta_i}.
\end{equation}
Note that each $\Theta_i$ has absolute value $\sqrt{p_i}$. Using (\ref{inequality for c_xj}), (\ref{divisible resulting condition}) and the inequality (\ref{bound for norm of cyclotomic integer}), we obtain
\begin{eqnarray*}
(w^2k)^{\varphi(k)/2}=\Big| N_{\mathbb{Q}(\zeta_k)/\mathbb{Q}}(w\prod_{i=1}^r\Theta_i)\Big| &\leq & \Big| N_{\mathbb{Q}(\zeta_k)/\mathbb{Q}}(Y) \Big| \leq \left(\frac{k}{\varphi(k)}\sum_{j=0}^{k-1} c_{xj}^2\right) ^{\varphi(k)/2}\\
&\leq & \left( 4^{t-r-1+\delta_{rt}}d^2\frac{k^2}{\varphi(k)}\right)^{\varphi(k)/2},
\end{eqnarray*}
proving Claim 4.

\medskip

\noindent \underline{Claim 5}. If $k\equiv 0 \pmod{2}$, then
$$w \leq 2^{t-r-1}(h,u)\sqrt{\frac{2k}{\varphi(k)}}.$$
We assume that $p_r=2$. Similar to the proof of Claim 4, we express $X$ as a linear combination of the basis elements in $\mathcal{B}_{l,k/2}\subset \mathbb{Q}(\zeta_l)$ over $\mathbb{Q}(\zeta_{k/2})$
$$X=\sum_{x\in \mathcal{B}_{l,k/2}} x\left(\sum_{j=0}^{k/2-1} d_{xj}\zeta_{k/2}^j\right), \ |d_{xj}| \leq 2^{t-r}d \ \forall \ x,j.$$
Note that $w\prod_{i=1}^{r-1}\Theta_i\in \mathbb{Q}(\zeta_{k/2})$ and divides $X$, so it divides $\sum_{j=0}^{k/2-1}d_{xj}\zeta_{k/2}^j$ for any $x\in \mathcal{B}_{l,k/2}$. As $X\neq 0$, there exists $x\in \mathcal{B}_{l,k/2}$ such that
$$Z=\sum_{j=0}^{k/2-1}d_{xj}\zeta_{k/2}^j\neq 0 \ \text{and} \ Z \equiv 0\pmod{w\prod_{i=1}^{r-1}\Theta_i}.$$
Using (\ref{inequality for c_xj}), (\ref{divisible resulting condition}) and the inequality (\ref{bound for norm of cyclotomic integer}), we obtain
\begin{eqnarray*}
(w^2k/2)^{\varphi(k/2)/2} &=& \Big| N_{\mathbb{Q}(\zeta_{k/2})/\mathbb{Q}}(Z) \Big| \leq \left(\frac{k/2}{\varphi(k/2)}\sum_{j=0}^{k/2-1} d_{xj}^2\right) ^{\varphi(k/2)/2}\\
&\leq & \left( 4^{t-r}d^2\frac{k^2}{4\varphi(k)}\right)^{\varphi(k/2)/2},
\end{eqnarray*}
proving Claim 5.

 \end{proof}

\medskip

\begin{cor} \label{h is a prime power}
Let $n$ and $b$ be positive integers and let $p$ be a prime. Suppose that a $\BH(\mathbb{Z}_n,p^b)$ matrix exists. Then $n=p^cm$ for some positive integers $c$ and $m$ in which $m\not\equiv 0\pmod{p}$. Moreover, assume that $p$ is selfconjugate modulo $m$. Then
\begin{equation} \label{exponent_p}
b\geq \lfloor c/2 \rfloor. 
\end{equation}
\end{cor}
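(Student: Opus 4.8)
The first assertion is immediate from the Lam--Leung criterion. Since $h=p^b$ has $p$ as its only prime divisor, Result~\ref{Lam-Leung} forces $n=a_1p$ for some nonnegative integer $a_1$, so $p\mid n$; writing $n=p^cm$ with $c=\nu_p(n)\geq 1$ and $p\nmid m$ gives the claimed form. The substance of the corollary is the inequality $b\geq\lfloor c/2\rfloor$, and the plan is to deduce it from Theorem~\ref{relation selfconjugate part} by tracking the $p$-adic valuation through the bounds (\ref{inequality1}) and (\ref{inequality2}).

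First I would set $M=\lcm(n,h)=p^{\max(b,c)}m$ and observe that the $p'$-part of $M$ is exactly $m$; hence the hypothesis that $p$ is self-conjugate modulo $m$, i.e.\ $p^j\equiv-1\pmod{m}$ for some $j$, says precisely that $p$ is self-conjugate modulo $M$ in the sense of Definition~\ref{selfconjugate definition}. Because self-conjugacy of a composite integer depends only on its set of prime divisors, the largest divisor $u$ of $n$ that is self-conjugate modulo $M$ contains the full prime-power part $p^{\nu_p(n)}=p^c$; in particular $\nu_p(u)=c$. Writing $u=w^2k$ with $k$ the square-free part of $u$, the exponent of $p$ in $k$ is $1$ or $0$ according as $c$ is odd or even, so $\nu_p(u)-\nu_p(k)=2\lfloor c/2\rfloor$ and therefore $\nu_p(w)=\lfloor c/2\rfloor$. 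I record this as $w=p^{\lfloor c/2\rfloor}w'$ with $p\nmid w'$, and note $(h,u)=\gcd(p^b,u)=p^{\min(b,c)}$.

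Next I would invoke Theorem~\ref{relation selfconjugate part} with $t=1$, since $p$ is the only prime dividing $h=p^b$. Arguing by contradiction, suppose $b<\lfloor c/2\rfloor$; then $\min(b,c)=b\leq\lfloor c/2\rfloor-1$. Substituting $(h,u)=p^b$ and $w=p^{\lfloor c/2\rfloor}w'$ into the relevant bound and cancelling the common factor $p^{\lfloor c/2\rfloor-1}$ reduces the inequality to
$$pw'\leq 2^{-r+\delta_{r1}}\sqrt{k/\varphi(k)}\quad(k\text{ odd})\qquad\text{or}\qquad pw'\leq 2^{-r}\sqrt{2k/\varphi(k)}\quad(k\text{ even}),$$
where $r$ is the number of prime divisors of $k$. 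The key numerical fact, which I would verify by a short case analysis on $r$, is that each right-hand side is strictly less than $2$: for odd $k$ every prime factor is at least $3$, so $k/\varphi(k)\leq(3/2)^r$, and one checks the bound is $1$ when $r\in\{0,1\}$ (using $\delta_{11}=1$) and at most $(3/8)^{r/2}<1$ when $r\geq 2$; for even $k$ one writes $k=2k''$ to get $2k/\varphi(k)<2^{r+1}$, whence the bound is at most $2^{(1-r)/2}\leq 1$ for $r\geq 1$. Since $w'\geq 1$ and $p\geq 2$ give $pw'\geq 2$, this is the desired contradiction, and I conclude $b\geq\lfloor c/2\rfloor$.

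The main obstacle I anticipate is the verification that these constants stay below $2$ uniformly in $r$; everything else is bookkeeping with valuations. A secondary point requiring care is the identification $\nu_p(u)=c$, which relies on the fact that in Definition~\ref{selfconjugate definition} the self-conjugacy of a divisor of $n$ is insensitive to the exponents of its prime factors, so that the maximal self-conjugate divisor inherits the full $p$-power $p^{\nu_p(n)}$.
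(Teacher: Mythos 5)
Your proof is correct, and at its core it takes the same route as the paper: both arguments specialize the bounds (\ref{inequality1}) and (\ref{inequality2}) of Theorem~\ref{relation selfconjugate part} to the case $t=1$, $h=p^b$. The difference is in the execution, and here your version is genuinely more careful than the paper's. The paper's proof simply asserts $u=p^c$, $w=p^{\lfloor c/2\rfloor}$, $k\in\{1,p\}$, $r\in\{0,1\}$; this presumes that $p^c$ is \emph{the} largest divisor of $n$ self-conjugate modulo $\lcm(n,h)$, which the hypotheses do not guarantee --- nothing prevents other prime divisors of $n$ from being self-conjugate as well (e.g.\ for $n=45$, $h=3$, $p=3$, both $3$ and $5$ are self-conjugate modulo $45$, so $u=45\neq p^c$). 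Strictly speaking, the paper's argument needs the additional observation that Theorem~\ref{relation selfconjugate part} remains valid with $u$ replaced by any self-conjugate divisor of $n$ coprime to its cofactor (which the theorem's proof supports, but its statement does not). You sidestep this entirely by working with the true maximal $u$, extracting only $\nu_p(u)=c$ (correctly justified via maximality and the prime-by-prime nature of self-conjugacy), writing $w=p^{\lfloor c/2\rfloor}w'$ and $(h,u)=p^{\min(b,c)}$, and running the contradiction $b\leq\lfloor c/2\rfloor-1$ through a case analysis over all $r$; your verification that the resulting constants stay below $2$ is sound. One microscopic slip: for $k=2$ (so $r=1$) one has $2k/\varphi(k)=4=2^{r+1}$, equality rather than the strict inequality you state, but the conclusion you actually use (bound at most $2^{(1-r)/2}\leq 1$) is correct, so nothing breaks. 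In short: same key lemma and same overall strategy, but your write-up closes a genuine gap that the paper's own two-line deduction leaves open.
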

\begin{proof}
The claim that $n=p^cm$ follows directly from Result \ref{Lam-Leung}.
 We apply the inequality (\ref{inequality}) to prove the second claim. In this case, we have $u=p^c$, $w=p^{\lfloor c/2 \rfloor}$, $t=1$, $k \in \{1,p\}$ and $r \in \{0,1\}$. 
 
 If $k\equiv 0 \pmod{2}$, then $k=p=2$, $r=1$ and the inequality (\ref{inequality2}) implies
$$p^{\lfloor c/2 \rfloor} \leq (p^b,p^c)\leq p^b,$$
proving (\ref{exponent_p}).

If $k\not\equiv 0\pmod{2}$, then $p\geq 3$ and the inequality (\ref{inequality1}) implies
$$p^{\lfloor c/2 \rfloor} \leq 2^{-r+\delta_{r1}}(p^b,p^c)\sqrt{\frac{k}{\varphi(k)}}\leq (p^b,p^c)\sqrt{\frac{p}{p-1}}<p^{b+1},$$
proving (\ref{exponent_p}).
\end{proof}

\medskip

\begin{rmk} \label{first three conditions}
We consider the impact of Theorem \ref{inequality relation between n and h}, Theorem \ref{divisibility selfconjugate} and Theorem \ref{relation selfconjugate part} on the existence of $\BH(\mathbb{Z}_n,h)$ matrices with $2\leq n,h \leq 100$.
 Theorem \ref{inequality relation between n and h} confirms the non-existence of $178$ cases. Theorem \ref{divisibility selfconjugate} confirms the nonexistence of $2946$ cases. Lastly, Theorem \ref{relation selfconjugate part} confirms the nonexistence of $723$ cases.
\end{rmk}

\bigskip

\subsection{Existence of {\boldmath $\BH(G,2p^b)$} Matrices}
Let $G$ be an abelian group, let $p$ be an odd prime and let $b$ be a positive integer. The main result of this section relies on the following result by Leung and Schmidt, see \cite[Theorem 22 and Theorem 23]{leu1} .

\begin{re} \label{other conditions}
Let $p$ be an odd prime and let $a$ be a positive integer. Let $m$ be a nonsquare integer and let $q_1,\dots,q_s$ be all distinct prime divisors of $m$. Put $f=\gcd(\ord_p(q_1),\dots, \ord_p(q_s))$. Suppose that $X\in \mathbb{Z}[\zeta_{p^a}]$ satisfies $|X|^2=m$. Then the following hold.
\begin{itemize}
\item[(i)] $f$ is an odd integer.
\item[(ii)] Either $f\leq m$ or $p\leq (f^2-m)/(f-m)$.
\item[(iii)] $p\leq m^2+m+1$.
\end{itemize}
\end{re}

\begin{thm} \label{prime power h other condition}
Let $b$ be a positive integer, let $p$ be an odd prime and let $G$ be an abelian group. Write $|G|=p^cm$, where $c\geq 0$ and $p$ does not divide $m$. Suppose that $m$ is not a square. Let $q_1,\dots, q_s$ be all distinct prime divisors of $m$. Put $f=\gcd(\ord_p(q_1),\dots,ord_p(q_s))$. If either a $\BH(G,p^b)$ matrix or a $\BH(G,2p^b)$ matrix exists, then the following hold.
\begin{itemize}
\item[(i)] $f$ is odd.
\item[(ii)] Either $f \leq m$ or $p \leq (f^2-m)/(f-m)$.
\item[(iii)] $p \leq m^2+m+1$.
\end{itemize}
\end{thm}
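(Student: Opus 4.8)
The plan is to reduce the existence of the matrix to the existence of a single cyclotomic integer of absolute norm exactly $m$ lying in $\mathbb{Z}[\zeta_{p^a}]$ for a suitable $a$, and then quote Result \ref{other conditions} verbatim. First I would suppose $H$ is a $\BH(G,h)$ matrix with $h\in\{p^b,2p^b\}$ and pass to the group-ring element $D=\sum_{g\in G}\zeta_h^{a_g}g$, which satisfies $DD^{(-1)}=|G|$ by Result \ref{equiv}. Since $p$ is odd we may take $\zeta_{2p^b}=-\zeta_{p^b}$, so $\mathbb{Z}[\zeta_{2p^b}]=\mathbb{Z}[\zeta_{p^b}]$ and in either case the entries of $D$ lie in $\mathbb{Z}[\zeta_{p^b}]$. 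Applying the trivial character $\chi_0$ to $DD^{(-1)}=|G|$ and setting $X=\chi_0(D)=\sum_{g\in G}\zeta_h^{a_g}$ yields $X\bar X=|G|=p^cm$ with $X\in\mathbb{Z}[\zeta_{p^a}]$, where $a=b$. Thus both hypotheses collapse to the same datum: one cyclotomic integer $X\in\mathbb{Z}[\zeta_{p^a}]$ with $X\bar X=p^cm$.

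The crux is to strip the factor $p^c$ off $X$ while remaining inside $\mathbb{Z}[\zeta_{p^a}]$. In $\mathbb{Z}[\zeta_{p^a}]$ the prime $p$ is totally ramified, $(p)=\mathfrak{p}^{\varphi(p^a)}$ for the unique prime $\mathfrak{p}$ above $p$; as $\mathfrak{p}$ is the only prime over $p$, it is fixed by complex conjugation, so $v_{\mathfrak{p}}(\bar X)=v_{\mathfrak{p}}(X)$, and from $X\bar X=p^cm$ with $p\nmid m$ I get $2v_{\mathfrak{p}}(X)=c\,\varphi(p^a)$. Let $\Theta=\sum_{j=1}^{p-1}\left(\frac{j}{p}\right)\zeta_p^j\in\mathbb{Z}[\zeta_p]\subseteq\mathbb{Z}[\zeta_{p^a}]$ be the quadratic Gauss sum from Result \ref{selfconjugacy divisibility}, satisfying $\Theta\bar\Theta=p$; since $\bar\Theta=\pm\Theta$ this forces $(\Theta)=\mathfrak{p}^{\varphi(p^a)/2}$, so $\Theta$ is a unit away from $\mathfrak{p}$. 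I would then set
\[
Y=\frac{X}{p^{\lfloor c/2\rfloor}\,\Theta^{\,c\bmod 2}},
\]
so that $|Y|^2=p^cm/p^c=m$. A short valuation check shows $v_{\mathfrak{p}}(Y)=0$ (the two parities $c$ even and $c$ odd both cancel exactly), while at every other prime $\mathfrak{q}$ one has $v_{\mathfrak{q}}(Y)\ge 0$ because $p$ and $\Theta$ are units there; hence $Y\in\mathbb{Z}[\zeta_{p^a}]$ is a genuine cyclotomic integer with $Y\bar Y=m$.

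Finally I would apply Result \ref{other conditions} to $Y\in\mathbb{Z}[\zeta_{p^a}]$ with $|Y|^2=m$, which is nonsquare by hypothesis; its three conclusions are exactly parts (i)--(iii) of the theorem, and the distinct prime divisors and the quantity $f=\gcd(\ord_p(q_1),\dots,\ord_p(q_s))$ are unchanged since $Y$ has the same norm $m$. The main obstacle is the middle paragraph: because $c$ may be odd, dividing by a power of $p$ alone does not land back in the ring, and one must absorb the leftover single factor of $p$ with the Gauss sum $\Theta$ (of norm $p$); verifying that the resulting quotient stays integral rests on the total ramification of $p$ in $\mathbb{Z}[\zeta_{p^a}]$ together with the fact that complex conjugation fixes $\mathfrak{p}$. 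Everything before and after this extraction is routine bookkeeping.
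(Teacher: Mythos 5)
Your proposal is correct, and its skeleton is the same as the paper's: handle $h=p^b$ and $h=2p^b$ uniformly via $\mathbb{Z}[\zeta_{2p^b}]=\mathbb{Z}[\zeta_{p^b}]$ (the paper instead uses the inclusion of a $\BH(G,p^b)$ matrix into $\BH(G,2p^b)$ matrices), apply the trivial character to $DD^{(-1)}=|G|$ from Result~\ref{equiv} to get $X\bar X=p^cm$ with $X\in\mathbb{Z}[\zeta_{p^b}]$, strip off the factor $p^c$, and quote Result~\ref{other conditions}. The one genuine divergence is the extraction step, and there your version is tighter than the paper's. The paper divides $X$ by $(1-\zeta_{p^b})^{\varphi(p^b)c/2}$; total ramification indeed gives $v_{\mathfrak p}(X)=c\varphi(p^b)/2$, so the quotient $Y$ is integral, but the asserted numerical identity $Y\bar Y=m$ does not follow immediately from this choice, since $(1-\zeta_{p^b})^{\varphi(p^b)}$ equals $p$ only up to a unit of $\mathbb{Z}[\zeta_{p^b}]$; concretely, already for $p^b=5$ one has $\bigl((1-\zeta_5)\overline{(1-\zeta_5)}\bigr)^{2}=(2-2\cos(2\pi/5))^2\neq 5$, so the paper's $Y$ satisfies $Y\bar Y=m$ only up to a totally positive unit. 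Your divisor $p^{\lfloor c/2\rfloor}\Theta^{\,c\bmod 2}$, with the Gauss sum satisfying $\Theta\bar\Theta=p$ exactly and $(\Theta)=\mathfrak p^{\varphi(p^b)/2}$, removes this defect: the same $\mathfrak p$-valuation computation (using that complex conjugation fixes the unique prime $\mathfrak p$ above $p$) shows $Y$ is integral, and $Y\bar Y=p^cm/p^c=m$ holds on the nose, which is exactly the hypothesis Result~\ref{other conditions} requires. In short, your write-up is the paper's proof with its one glib step repaired.
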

\begin{proof}
As $\BH(G,p^b) \subset \BH(G,2p^b)$, it suffices to assume that a $\BH(G,2p^b)$ matrix $H$ exists. Let the first row of $H$ be $(\zeta_{2p^b}^{a_g})_{g\in G}$, $a_g \in \mathbb{Z}$ for all $g\in G$. By Result \ref{equiv}, we have
$$DD^{(-1)}=p^cm, \ \ D=\sum_{g\in G}\zeta_{2p^b}^{a_g}g \in \mathbb{Z}[\zeta_{2p^b}][G].$$
Put $X=\chi_0(D)=\sum_{g\in G} \zeta_{2p^b}^{a_g}$. We obtain
$$X\bar{X}=p^cm, \ X\in \mathbb{Z}[\zeta_{2p^b}]=\mathbb{Z}[\zeta_{p^b}].$$
In the ring $\mathbb{Z}[\zeta_{p^b}]$, we have $p\mathbb{Z}[\zeta_{p^b}]=(1-\zeta_{p^b})^{\varphi(p^b)}$ and $(1-\zeta_{p^b})$ is the only prime ideal above $p$. Thus $(1-\zeta_{p^b})^{\varphi(p^b)c/2}$ divides $(X)$ as ideals of $\mathbb{Z}[\zeta_{p^b}]$. Putting
$Y=X(1-\zeta_{p^b})^{-\varphi(p^b)c/2} \in \mathbb{Z}[\zeta_{p^b}]$, we obtain
$$Y\bar{Y}=m.$$
The conclusion follows directly from Result \ref{other conditions}.
\end{proof}

\medskip

\begin{cor}
If $G$ is an abelian group with $|G|=2p^c$ for some non-negative integer $c$ and odd prime $p$, then there is no $\BH(G,2p^b)$ matrix.
\end{cor}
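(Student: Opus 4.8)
The plan is to read the corollary off Theorem \ref{prime power h other condition}, applied to a hypothetical $\BH(G,2p^b)$ matrix. Writing $|G|=p^cm$ with $p\nmid m$, the assumption $|G|=2p^c$ forces $m=2$, which is not a square, so the theorem applies. The unique prime divisor of $m$ is $q_1=2$, so $f=\ord_p(2)$, and I would simply substitute $m=2$ into the three necessary conditions (i)--(iii). The whole argument is then a finite case check, independent of the parameters $c$ and $b$.

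First I would invoke condition (iii). With $m=2$ it reads $p\leq m^2+m+1=7$, which already collapses an a priori infinite problem to the three candidate odd primes $p\in\{3,5,7\}$; every $p\geq 11$ is excluded outright. Next I would dispose of $p=3$ and $p=5$ using condition (i). A one-line computation gives $\ord_3(2)=2$ and $\ord_5(2)=4$, both even, whereas (i) requires $f$ to be odd; hence neither $p=3$ nor $p=5$ can occur. This leaves a single prime to rule out.

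The remaining, and genuinely delicate, case is $p=7$, where $\ord_7(2)=3$ is odd so condition (i) is satisfied. Here I would turn to condition (ii): with $f=3$ and $m=2$ we have $f>m$ and $(f^2-m)/(f-m)=7$, placing $p=7$ exactly at the boundary $p\leq 7$. The main obstacle is precisely this equality case: all three displayed inequalities of Theorem \ref{prime power h other condition} are met when $p=7$, so the theorem as stated does not by itself terminate the argument. Excluding $p=7$ therefore requires extracting more from the underlying estimate than its three corollaries (i)--(iii) record.

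Concretely, in the reduction behind Theorem \ref{prime power h other condition} one arrives at an element $Y\in\mathbb{Z}[\zeta_{7^b}]$ with $Y\bar Y=2$, and I expect the real content to lie in showing that no such $Y$ exists, i.e.\ in the tightness of the Leung--Schmidt bounds of Result \ref{other conditions} at the boundary $p=(f^2-m)/(f-m)$. I would resolve this by analyzing the equality case directly---tracking the prime decomposition of $2$ in $\mathbb{Z}[\zeta_{7^b}]$ together with the action of complex conjugation to rule out a solution of $Y\bar Y=2$---rather than by any further appeal to (i)--(iii). The easy eliminations of $p=3,5$ and of all $p\geq 11$ are routine; the $p=7$ boundary is where the proof must work.
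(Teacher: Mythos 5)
Your first two steps coincide with the paper's proof: apply part (iii) with $m=2$ to get $p\leq 7$, then use part (i) with $\ord_3(2)=2$ and $\ord_5(2)=4$ to eliminate $p=3,5$. You also correctly observe that $p=7$ satisfies all three conditions (indeed $f=\ord_7(2)=3$ is odd and $(f^2-m)/(f-m)=7$), so the theorem alone cannot finish the proof. The gap is in your proposed endgame: you plan to rule out $p=7$ by showing that $Y\bar{Y}=2$ has no solution $Y\in\mathbb{Z}[\zeta_{7^b}]$, but such a solution \emph{exists}. Take $Y=1+\zeta_7+\zeta_7^3$; then
\begin{equation*}
Y\bar{Y}=(1+\zeta_7+\zeta_7^3)(1+\zeta_7^{-1}+\zeta_7^{-3})=3+\sum_{j=1}^{6}\zeta_7^j=3-1=2,
\end{equation*}
and since $\mathbb{Z}[\zeta_7]\subseteq\mathbb{Z}[\zeta_{7^b}]$ this works for every $b$. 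This identity is precisely why the Leung--Schmidt bound in condition (ii) is attained at $p=7$: the bound is tight because the equation $|Y|^2=2$ is solvable there, not because the estimate is loose. Consequently no argument tracking the splitting of $2$ in $\mathbb{Z}[\zeta_{7^b}]$ and complex conjugation can succeed ($2$ splits as $P\bar{P}$ with $P$ principal, generated by the element above), and your approach fails at exactly the point you identified as the crux.

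The paper closes the $p=7$ case differently: it cites the dedicated nonexistence result of Leung and Schmidt \cite{leu1}, where ruling out $\BH(G,2\cdot 7^b)$ matrices with $|G|=2\cdot 7^c$ requires substantially more than the single-character equation $Y\bar{Y}=2$ --- one must exploit the full group-ring equation $DD^{(-1)}=|G|$ (i.e.\ the constraints from \emph{all} characters simultaneously, together with the known shape of all solutions of $X\bar{X}=2$ in $\mathbb{Z}[\zeta_{7^a}]$). To repair your proof you would either need to reproduce that analysis or simply invoke \cite{leu1} for $p=7$, as the paper does.
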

\begin{proof}
Using Theorem \ref{prime power h other condition} part $(iii)$ for $m=2$, we obtain $p \leq 7$. The case $p=3$ or $p=5$ cannot satisfy the condition $f=\ord_p(2)$ is odd. Therefore, the only possible value for $p$ is $p=7$. The case $p=7$ was also ruled out by Leung and Schmidt \cite{leu1}.
\end{proof}

\noindent We note that Theorem \ref{prime power h other condition} confirms the nonexistence of $2361$ $\BH(\mathbb{Z}_n,h)$ matrices in which $2\leq n, h\leq 100$.

\medskip

\begin{rmk} \label{ending rmk}
Using Results \ref{nonexistence}, \ref{sylvester conditions}, \ref{Lam-Leung} and \ref{Brock}, we have $5108$ open cases for the existence of a $\BH(\mathbb{Z}_n,h)$ matrix in which $1 \leq n,h \leq 100$. Result \ref{TaiSchmidt} settles the  existence problem  
of $1798$ of these cases. There are $3310$ cases left. 
Theorem \ref{inequality relation between n and h}, Theorem \ref{divisibility selfconjugate}, Theorem \ref{relation selfconjugate part} and Theorem \ref{prime power h other condition} confirm the nonexistence of $623$ cases in the $3310$ open cases above. 

In summary, there are $2687$ open cases for the existence of $\BH(\mathbb{Z}_n,h)$ matrices in which $1\leq n,h \leq 100$. This list of open cases is provided in \cite{duc2}.
\end{rmk}

\bigskip


\section{Application to Perfect Polyphase Arrays }

We recall from the introduction that a perfect $h$-phase sequence of length $n$ is equivalent to a $\BH(\mathbb{Z}_n,h)$ matrix.
Theorem \ref{inequality relation between n and h}, Theorem \ref{divisibility selfconjugate}, Theorem \ref{relation selfconjugate part} and Theorem \ref{prime power h other condition} provide various necessary conditions for the existence of such sequences. In Lemma \ref{equivalence} below, we prove that a perfect $h$-phase array of size $n_1\times\cdots\times n_k$ is equivalent to a $\BH(\mathbb{Z}_{n_1}\times \dots \times \mathbb{Z}_{n_k},h)$ matrix.

\medskip

\noindent The following table gives parameters of known perfect arrays, see \cite{blake}.

\begin{center}
\begin{tabular}{| c | c | c | }
\hline
Author & Array size & Alphabet size (value of $h$) \\
\hline
Jedwab, Mitchell & $2^n\times 2^n$ & 2 \\
\hline
Kopilovich & \begin{tabular}{@{}c@{}} $(3\times 2^{n+1})\times (3 \times 2^{n+1})$ \\ $(3\times 2^n)\times (3 \times 2^{n+2})$
\end{tabular}& $2$ \\
\hline
Wild & $2^n\times 2^{n+2}$ & $2$  \\
\hline
Blake, Hall, Tirkel & $n\times n$ & \begin{tabular}{@{}c@{}} $n$ if $n$ is odd,\\ $2n$ if $n$ is even \end{tabular}\\
\hline
Blake, Hall, Tirkel & $n\times n^{2k+1}$ & \begin{tabular}{@{}c@{}} $n^{k+1}$ if $n$ is odd,\\ $\lcm(2n,n^{k+1})$ if $n$ is even \end{tabular}\\
\hline
Blake, Hall, Tirkel & $n\times n^2$ & \begin{tabular}{@{}c@{}} $n$ if $n$ is odd,\\ $2n$ if $n$ is even \end{tabular}\\
\hline
Blake, Hall, Tirkel & $n^{2k+1}\times n^{2k+1}$ & $n^{k+1}$\\
\hline
Blake, Hall, Tirkel & $n^2\times n^2$ & $n$\\
\hline
\end{tabular}

\end{center}

\medskip

\noindent The following result is a direct corollary of Result \ref{TaiSchmidt}, see also \cite{mow}.

\begin{re} \label{circulant butson}
Let $n$ and $h$ be positive integers. Then a $\BH(\mathbb{Z}_n,h)$ matrix exists whenever $n$ and $h$ satisfy the following condition:
\begin{equation*}
n \mid (h,n)^2 \ \text{and} \ (\nu_2(n),\nu_2(h)) \neq (1,1). 
\end{equation*}
\end{re}

\noindent To prepare for the main result of this section, we need the following lemmas.

\begin{lem} \label{product of group-invariant matrices}
Let $h_1$ and $h_2$ be positive integers and put $h=\lcm(h_1,h_2)$. Let $G_1$ and $G_2$ be finite abelian groups. Assume that $H_1$ is a $\BH(G_1,h_1)$ matrix and $H_2$ is a $\BH(G_2,h_2)$ matrix. Then the Kronecker product $H=H_1\otimes H_2 $ is a $\BH(G_1\times G_2,h)$ matrix.
\end{lem}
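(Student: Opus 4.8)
The plan is to verify the three defining properties of a $\BH(G_1\times G_2,h)$ matrix in turn: that every entry is a complex $h$th root of unity, that $HH^*=|G_1\times G_2|I$, and that $H$ is $(G_1\times G_2)$-invariant. Throughout, I index the rows and columns of $H=H_1\otimes H_2$ by the product group $G_1\times G_2$, declaring the entry of $H$ at position $((g_1,g_2),(k_1,k_2))$ to be $(H_1)_{g_1,k_1}(H_2)_{g_2,k_2}$. Fixing this indexing convention is the first and really the only conceptual step, since it is what lets the group-invariance of $H$ be phrased in terms of the componentwise group operation on $G_1\times G_2$.

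The entry condition is immediate. Since $h=\lcm(h_1,h_2)$, every $h_1$th root of unity and every $h_2$th root of unity is an $h$th root of unity, and the product of two $h$th roots of unity is again an $h$th root of unity. Hence each entry $(H_1)_{g_1,k_1}(H_2)_{g_2,k_2}$ of $H$ is a complex $h$th root of unity.

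For the Hadamard property I would invoke the standard Kronecker-product identities $(A\otimes B)(C\otimes D)=(AC)\otimes(BD)$ and $(A\otimes B)^*=A^*\otimes B^*$. Together with the hypotheses $H_1H_1^*=|G_1|I_{|G_1|}$ and $H_2H_2^*=|G_2|I_{|G_2|}$, these give
\[
HH^*=(H_1\otimes H_2)(H_1^*\otimes H_2^*)=(H_1H_1^*)\otimes(H_2H_2^*)=(|G_1|I_{|G_1|})\otimes(|G_2|I_{|G_2|}),
\]
which equals $|G_1||G_2|\,(I_{|G_1|}\otimes I_{|G_2|})=|G_1\times G_2|\,I_{|G_1\times G_2|}$, as required.

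Finally, for $(G_1\times G_2)$-invariance, fix $(l_1,l_2)\in G_1\times G_2$. Using the indexing convention and then applying the $G_1$-invariance of $H_1$ and the $G_2$-invariance of $H_2$ coordinatewise, the entry of $H$ at position $((g_1+l_1,g_2+l_2),(k_1+l_1,k_2+l_2))$ is
\[
(H_1)_{g_1+l_1,k_1+l_1}(H_2)_{g_2+l_2,k_2+l_2}=(H_1)_{g_1,k_1}(H_2)_{g_2,k_2},
\]
which is exactly the entry at $((g_1,g_2),(k_1,k_2))$. Hence $H$ is $(G_1\times G_2)$-invariant, completing the verification. I do not expect any genuine obstacle: the result is a routine consequence of the multiplicativity of the Kronecker product, and the only point requiring care is the bookkeeping of the indexing, namely ensuring that the group operation used in the invariance condition on $H$ is matched against the factorwise action on the Kronecker indices.
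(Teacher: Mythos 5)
Your proof is correct and follows essentially the same route as the paper: fix the product-group indexing $H_{(g_1,g_2),(k_1,k_2)}=(H_1)_{g_1,k_1}(H_2)_{g_2,k_2}$ and verify the three defining properties directly. The only cosmetic difference is that you dispatch $HH^*=|G_1||G_2|\,I$ by citing the mixed-product and conjugate-transpose identities for Kronecker products, whereas the paper expands the inner product of two rows by hand and observes that it factors into a product of two sums, at least one of which vanishes --- the explicit computation is precisely the proof of the identity you invoke, so the content is identical.
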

\begin{proof}
Assume that $H_1=(h_{x_1,y_1})_{x_1,y_1 \in G_1}$ and $H_2=(k_{x_2,y_2})_{x_2,y_2 \in G_2}$. The elements of the matrix $H=H_1\otimes H_2$ are indexed by $G_1 \times G_2$ in which the $(x_1,x_2)\times (y_1,y_2)$ element is $H_{(x_1,x_2),(y_1,y_2)}=h_{x_1,y_1}k_{x_2,y_2}$. Let $(a_1,a_2)$ be any element in $G_1\times G_2$. We have
\begin{eqnarray*}
H_{(a_1+x_1,a_2+x_2),(a_1+y_1,a_2+y_2)}&=&h_{a_1+x_1,a_1+y_1}k_{a_2+x_2,a_2+y_2} \\
&=& h_{x_1,y_1}k_{x_2,y_2}\\
&=&H_{(x_1,x_2),(y_1,y_2)},
\end{eqnarray*}
proving that $H$ is $G_1\times G_2$-invariant. 

On the other hand, note that each entry of $H$ is a product of a $(h_1)$th root of unity and a $(h_2)$th root of unity, so it is a $h$th root of unity. It remains to verify that $HH^*=|H|I_{|H|}$, or equivalently, any two distinct rows $(a_1,a_2)$ and $(b_1,b_2)$ of $H$ are orthogonal. The inner product of these two rows is
\begin{eqnarray*}
\sum_{x_1\in G_1, x_2\in G_2}H_{(a_1,a_2),(x_1,x_2)}\overline{H}_{(b_1,b_2),(x_1,x_2)}&=&\sum_{x_1\in G_1, x_2\in G_2} h_{a_1,x_1}\overline{h}_{b_1,x_1}k_{a_2,x_2}\overline{k}_{b_2,x_2} \\
&=& \big( \sum_{x_1\in G_1} h_{a_1,x_1}\overline{h}_{b_1,x_1}\big) \big(\sum_{x_2\in G_2}k_{a_2,x_2}\overline{k}_{b_2,x_2}\big) \\
&=& \quad 0,
\end{eqnarray*}
where the last equality holds as $(a_1,b_1)\neq (a_2,b_2)$ and $H_1$ is a $\BH(G_1,h_1)$ matrix and $H_2$ is a $\BH(G_2,h_2)$ matrix.
\end{proof}

\begin{lem} \label{equivalence}
Let $k,h, n_1,\ldots ,n_k$ be positive integers. Then a $\BH(\mathbb{Z}_{n_1}\times \cdots \times \mathbb{Z}_{n_k},h)$ matrix exists if and only if a perfect $h$-phase array of size $n_1\times \cdots \times n_k$ exists.
\end{lem}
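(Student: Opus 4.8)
The plan is to establish the equivalence by making precise the correspondence between the indexing of a $k$-dimensional array and the elements of the group $\mathbb{Z}_{n_1}\times\cdots\times\mathbb{Z}_{n_k}$, and then showing that the perfect-array autocorrelation condition is exactly the group-invariant Butson condition $HH^*=|G|I$. First I would fix $G=\mathbb{Z}_{n_1}\times\cdots\times\mathbb{Z}_{n_k}$ and identify each index tuple $(i_1,\dots,i_k)$ (with $0\le i_j\le n_j-1$) with the group element $g=(i_1,\dots,i_k)\in G$, so that an array $A=(a_{i_1,\dots,i_k})$ with $h$th-root-of-unity entries is the same data as a function $G\to\{h\text{th roots of unity}\}$, i.e.\ the first row $(a_g)_{g\in G}$ of a candidate $G$-invariant matrix. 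Given such data, define $H=(H_{g,k})_{g,k\in G}$ by $H_{g,k}=a_{k-g}$, which is automatically $G$-invariant, and conversely any $G$-invariant matrix arises this way from its entries $a_{g}:=H_{1_G,g}$.

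Next I would translate the two defining conditions into each other. For the forward direction, suppose $A$ is a perfect $h$-phase array, so every autocorrelation $R_{s_1,\dots,s_k}$ vanishes for $(s_1,\dots,s_k)\neq(0,\dots,0)$. Writing $s=(s_1,\dots,s_k)\in G$, the sum
$$R_s=\sum_{g\in G} a_g\overline{a_{g+s}}$$
is precisely the inner product between two rows of $H$ that differ by the group-shift $s$; by $G$-invariance every pair of distinct rows of $H$ differs by some nonzero $s\in G$, and the diagonal term $R_0=|G|$ gives $HH^*=|G|I$. Hence $H$ is a $\BH(G,h)$ matrix. The reverse direction runs identically in the opposite order: from $HH^*=|G|I$ and $G$-invariance, the off-diagonal entries of $HH^*$ are exactly the autocorrelations $R_s$ for $s\neq 0$, so they all vanish and $A$ is perfect.

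The main care-point, rather than a genuine obstacle, is bookkeeping the two indexing conventions so they match: the array's indices are reduced modulo $n_j$ coordinatewise, which is exactly addition in $\mathbb{Z}_{n_1}\times\cdots\times\mathbb{Z}_{n_k}$, and one must check that the shift $(s_1,\dots,s_k)\mapsto$ row-pair in $H$ is a bijection between nonzero group elements and unordered pairs of distinct rows up to the invariance relation. I would lean on Result \ref{equiv} to avoid re-deriving the group-ring reformulation: setting $D=\sum_{g\in G}a_g g\in\mathbb{Z}[\zeta_h][G]$, the perfectness of $A$ is equivalent to the single coefficient condition $DD^{(-1)}=|G|$ (the nonzero-shift autocorrelations are the nontrivial coefficients of $DD^{(-1)}$, and the trivial coefficient is $R_0=|G|$), and Result \ref{equiv} already equates $DD^{(-1)}=|G|$ with $H$ being a $\BH(G,h)$ matrix. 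This makes the proof a short two-way implication once the index identification is stated cleanly.
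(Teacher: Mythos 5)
Your proposal is correct and follows essentially the same route as the paper: both identify the array entries with the coefficients of a group-ring element $D=\sum_{g\in G}a_g g$, observe that the autocorrelations $R_s$ are (up to conjugation) exactly the coefficients of $DD^{(-1)}$, and invoke Result \ref{equiv} to equate $DD^{(-1)}=|G|$ with the existence of a $\BH(G,h)$ matrix. Your additional direct verification via row inner products is fine but redundant, since that is precisely what Result \ref{equiv} encapsulates.
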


\begin{proof}
Put $G=\mathbb{Z}_{n_1}\times \cdots\times \mathbb{Z}_{n_k}$ and suppose that a $\BH(G,h)$ matrix exists. For each $1\leq i \leq k$, let $g_i$ be a generator of $\mathbb{Z}_{n_i}$. By Result \ref{equiv}, there exists $D \in \mathbb{Z}[\zeta_h][G]$ such that $DD^{(-1)}=n_1\cdots n_k$. Define 
$$I=\{(i_1,\dots,i_k): \ 0\leq i_j\leq n_j-1 \ \text{for} \ j=1,\dots,k\}.$$
Write
$$D=\sum_{(i_1,\dots,i_k)\in I}a_{i_1,\dots,i_k}g_1^{i_1}\cdots g_k^{i_k},$$
where each $a_{i_1,\dots,i_k}$ is a complex $h$th root of unity. Note that
$$DD^{(-1)}=\sum_{(s_1,\dots,s_k)\in I} \left(\sum_{(i_1,\dots,i_k)\in I}a_{i_1+s_1,\dots,i_k+s_k}\overline{a}_{i_1,\dots,i_k}\right)g_1^{s_1}\cdots g_k^{s_k},$$
As $DD^{(-1)}=n_1\cdots n_k$, we obtain
\begin{equation} \label{relation}
\sum_{(i_1,\dots,i_k)\in I}a_{i_1+s_1,\dots,i_k+s_k}\overline{a}_{i_1,\dots,i_k}=\begin{cases} n_1\cdots n_k \ \ \text{if} \ s_1=\dots=s_k=0, \\ 0 \ \ \text{otherwise}. \end{cases}
\end{equation}
Define the array $A$ of size $n_1\times\cdots\times n_k$ by
$$A=(a_{i_1,\dots,i_k}), \ 0\leq i_j \leq n_j-1 \ \text{for all} \ j.$$
The equation (\ref{relation}) implies that $A$ is perfect. Conversely, it is straightforward to verify that the existence of a perfect array $A$ implies the existence of a group ring element $D\in \mathbb{Z}[\zeta_h][G]$ whose coefficients are complex $h$th roots of unity such that $DD^{(-1)}=n$, which implies a $\BH(G,h)$ matrix by Result \ref{equiv}.
\end{proof}

\medskip

\begin{thm} \label{newperfectarray}
Suppose that $k,h,n_1,\ldots,n_k$ are positive integers such that
\begin{equation}
n_i \mid (h,n_i)^2 \ \text{and} \ (\nu_2(n_i),\nu_2(h)) \neq (1,1)  \ \text{for any} \ 1\leq i \leq k. \tag{$\star$}
\end{equation}
Then a perfect $h$-phase array of size $n_1\times \cdots \times n_k$ exists.
\end{thm}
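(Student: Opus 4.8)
The plan is to assemble the array from one-dimensional pieces using the two structural lemmas that precede the statement, together with the existence result for circulant Butson Hadamard matrices. First I would observe that the hypothesis $(\star)$ is tailored exactly to Result \ref{circulant butson}: for each index $i$ with $1\leq i\leq k$, the conditions $n_i \mid (h,n_i)^2$ and $(\nu_2(n_i),\nu_2(h)) \neq (1,1)$ are precisely what is needed to guarantee the existence of a $\BH(\mathbb{Z}_{n_i},h)$ matrix. So the first step produces, for every $i$, a matrix $H_i$ that is a $\BH(\mathbb{Z}_{n_i},h)$ matrix.

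Next I would combine these one-dimensional matrices via the Kronecker product, invoking Lemma \ref{product of group-invariant matrices}. Since all $k$ matrices share the same root-of-unity order $h$, the least common multiple appearing in that lemma collapses, as $\lcm(h,h)=h$ at every stage. Proceeding by induction on $k$, I would show that $H = H_1 \otimes H_2 \otimes \cdots \otimes H_k$ is a $\BH(\mathbb{Z}_{n_1}\times \cdots \times \mathbb{Z}_{n_k},\,h)$ matrix: the base case is immediate, and the inductive step applies Lemma \ref{product of group-invariant matrices} to the already-constructed $\BH(\mathbb{Z}_{n_1}\times \cdots \times \mathbb{Z}_{n_{j}},h)$ matrix and the new factor $H_{j+1}$, again using that the combined alphabet order remains $h$.

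Finally I would translate this group-invariant matrix into a perfect array using Lemma \ref{equivalence}, which states that a $\BH(\mathbb{Z}_{n_1}\times \cdots \times \mathbb{Z}_{n_k},h)$ matrix exists if and only if a perfect $h$-phase array of size $n_1\times \cdots \times n_k$ exists. Applying the forward direction of that equivalence to the matrix $H$ constructed above yields the desired perfect $h$-phase array, completing the argument.

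Since every ingredient is already available in the excerpt, there is no genuinely hard step here; the proof is essentially a clean chaining of three established results. The only point requiring mild care is bookkeeping the alphabet order through the repeated Kronecker products, namely verifying that it stays equal to $h$ rather than inflating to some larger least common multiple; this is automatic because all factors use the same $h$, so I expect the verification to be routine rather than obstructive.
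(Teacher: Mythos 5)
Your proposal is correct and follows exactly the paper's own proof: Result \ref{circulant butson} gives the $\BH(\mathbb{Z}_{n_i},h)$ matrices, Lemma \ref{product of group-invariant matrices} (iterated via Kronecker products, with $\lcm(h,h)=h$) yields a $\BH(\mathbb{Z}_{n_1}\times\cdots\times\mathbb{Z}_{n_k},h)$ matrix, and Lemma \ref{equivalence} converts it to the perfect array. Your explicit induction on $k$ and the remark about the alphabet order not inflating are just slightly more careful bookkeeping of the same chain of results.
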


\begin{proof}
By Result \ref{circulant butson}, the condition $(\star)$ imply that a $\BH(\mathbb{Z}_{n_i},h)$ matrix exist for any $i=1,\dots,k$. By Lemma \ref{product of group-invariant matrices}, a $\BH(\mathbb{Z}_{n_1}\times \cdots \times \mathbb{Z}_{n_k}, h)$ matrix exists. The desired array is constructed as in Lemma \ref{equivalence}.
\end{proof}

\bigskip

\noindent \textbf{Acknowledgement.} 
The author would like to thank Bernhard Schmidt for his tremendous help and guidance throughout the project.

\end{document}